 \DeclareMathOperator*{\argmin}{argmin}
 \def\reff#1{{\rm(\ref{#1})}}
 \def \R{{\mathbb{R}}}
 \def \dom{{{\rm dom}~}}
\def\argmin{\mathop{\rm arg\,min}}
 \newtheorem{assumption}{Assumption}[section]
\title{Linear Convergence of proximal gradient algorithm with extrapolation for a class of nonconvex nonsmooth minimization problems}
\author{Bo Wen\footnotemark[2] \and Xiaojun Chen\footnotemark[3] \and Ting Kei Pong\footnotemark[3]}
\begin{document}
\maketitle

\renewcommand{\thefootnote}{\fnsymbol{footnote}}
\footnotetext[2]{Department of Mathematics, Harbin Institute of Technology, Harbin, P.R. China. Current address: Department of Applied Mathematics, The Hong Kong Polytechnic University, Hong Kong, P.R. China. (bo.wen@ connect.polyu.hk). This author's work was supported in part by the NSFC 11471088 and Hong Kong Research Grants Council PolyU5002/13p.}
\footnotetext[3]{Department of Applied Mathematics, The Hong Kong Polytechnic University, Hong Kong, P.R. China.(\email{xiaojun.chen@polyu.edu.hk}, \email{tk.pong@polyu.edu.hk}). The second author's work was supported in part by Hong Kong Research Grants Council PolyU153001/14p. The third author's work was supported in part by Hong Kong Research Grants Council PolyU253008/15p.}
\renewcommand{\thefootnote}{\arabic{footnote}}

\begin{abstract}
In this paper, we study the proximal gradient algorithm with extrapolation for minimizing the sum of a Lipschitz differentiable function and a proper closed convex function. Under the error bound condition used in \cite{Luo1993} for analyzing the convergence of the proximal gradient algorithm, we show that there exists a threshold such that if the extrapolation coefficients are chosen below this threshold, then the sequence generated converges $R$-linearly to a stationary point of the problem. Moreover, the corresponding sequence of objective values is also $R$-linearly convergent. In addition, the threshold reduces to $1$ for convex problems and, as a consequence, we obtain the $R$-linear convergence of the sequence generated by FISTA with fixed restart. Finally, we present some numerical experiments to illustrate our results.

\end{abstract}

\begin{keywords}
linear convergence, extrapolation, error bound, accelerated gradient method, nonconvex nonsmooth minimization, convex minimization
\end{keywords}

\begin{AMS} 90C30, 65K05, 90C25, 90C26\end{AMS}

\pagestyle{myheadings}
\thispagestyle{plain}
\markboth{PROXIMAL GRADIENT ALGORITHM WITH EXTRAPOLATION}{BO WEN, XIAOJUN CHEN, AND TING KEI PONG}

\section{Introduction}

In this paper, we consider the following optimization problem:
\begin{equation}\label{compro}
  \min_{x\in \R^n} F(x) := f(x) + g(x),
\end{equation}
where $g$ is a proper closed convex function and $f$ is a possibly nonconvex function that has a Lipschitz continuous gradient. We also assume that the proximal operator of $\mu g$, i.e.,
\begin{equation*}
  u \mapsto \argmin_{x\in \R^n} \left\{g(x) + \frac1{2\mu}\|x - u\|^2\right\}
\end{equation*}
is easy to compute for all $\mu > 0$ and any $u\in \R^n$, where $\argmin$ denotes the {\em unique} minimizer. We also assume that the optimal value of \eqref{compro} is finite and is attained. Problem~\eqref{compro} arises in many important contemporary applications including compressed sensing \cite{Tao2005,Donoho2006}, matrix completion \cite{CaRe09} and image processing \cite{Chambolle2004}. Since the problem instances are typically of large scale, first-order methods such as the proximal gradient algorithm \cite{Lions1979} are used for solving them, whose main computational efforts per iteration are the evaluations of the gradient of $f$ and the proximal mapping of $\mu g$. For the proximal gradient algorithm, when $f$ is in addition convex, it is known that
\[
F(x^k) - \inf_{x\in \R^n} F(x) = O\left(\frac1k\right),
\]
where $\{x^k\}$ is generated by the proximal gradient algorithm; see, for example, \cite[Theorem~1(a)]{Tseng2010}. However, the proximal gradient algorithm, in its original form, can be slow in practice; see, for example, \cite[Section 5]{Donoghue2013}.

Various attempts have thus been made to accelerate the proximal gradient algorithm. One simple and often efficient strategy is to perform extrapolation, where {\em momentum} terms involving the previous iterations are added to the current iteration. A prototypical algorithm takes the following form
\begin{eqnarray}\label{nest1}
\begin{cases}
y^{k}=x^{k}+\beta_{k}(x^{k}-x^{k-1}),\\
x^{k+1}=\argmin\limits_{x\in \R^n} \left\{\langle\nabla f(y^k),x\rangle + \frac1{2\mu}\|x - y^k\|^2 + g(x)\right\},
\end{cases}
\end{eqnarray}
where $\mu > 0$ is a constant that depends on the Lipschitz continuity modulus of $\nabla f$, and the extrapolation coefficients $\beta_k$ satisfy $0\le \beta_k \le 1$ for all $k$.
A recent example is the fast iterative shrinkage-thresholding algorithm (FISTA) proposed by Beck and Teboulle \cite{Beck2009}, which is based on Nesterov's extrapolation techniques \cite{Nesterov1983,Nesterov2004,Nesterov2005,Nesterov20071} and is designed for solving \eqref{compro} with $f$ being convex and $g$ being continuous. Their analysis can be directly extended to the case when $g$ is a proper closed convex function. The same algorithm was also independently proposed and studied by Nesterov \cite{Nesterov2007}. FISTA takes the form \eqref{nest1} and requires $\{\beta_k\}$ to satisfy a certain recurrence relation.
It was shown in \cite{Beck2009,Nesterov2007} that this algorithm exhibits a faster convergence rate than the proximal gradient algorithm, which is
\[
F(x^k) - \inf_{x\in \R^n} F = O\left(\frac1{k^2}\right),
\]
where $\{x^k\}$ is generated by FISTA. Many accelerated proximal gradient algorithms based on Nesterov's extrapolation techniques have been proposed since then, and we refer the readers to \cite{Becker2011,BCG11,Tseng2010} and the references therein for an overview of these algorithms.

The faster convergence rate of FISTA in terms of objective values motivates subsequent studies on the extrapolation scheme \eqref{nest1}; see, for example, \cite{Attouch2015,Chambolle2014,Donoghue2013,Johnstone2015,Zhang2015}.
Particularly, O'Donoghue and Cand\`{e}s \cite{Donoghue2013} proposed an adaptive restart scheme for $\beta_k$ based on FISTA for solving \eqref{compro} with $f$ being convex and $g = 0$. Specifically, instead of following the recurrence relation of $\beta_k$ in FISTA for all $k$, they reset $\beta_k = \beta_0$ every $K$ iterations, where $K$ is a positive number. They established {\em global} linear convergence of the function values when $f$ is strongly convex if $K$ is sufficiently large. Their algorithm is robust against errors in the estimation of the strong convexity modulus of $f$; see the discussion in \cite[Section~2.1]{Donoghue2013}. Later, Attouch and Chbani \cite{Attouch2015}, and independently, Chambolle and Dossal \cite{Chambolle2014}, established the convergence of the whole sequence generated by \eqref{nest1} for solving \eqref{compro} when $f$ is convex and $\beta_k= \frac{k-1}{k+\alpha-1}$ for any fixed $\alpha>3$. More recently, Tao, Boley and Zhang \cite{Zhang2015} established local linear convergence of FISTA applied to the LASSO (i.e., $f$ is a least squares loss function and $g$ is a positive multiple of the $\ell_1$ norm) under the assumption that the problem has a unique solution that satisfies strict complementarity. Johnstone and Moulin \cite{Johnstone2015} considered \eqref{compro} with $f$ being convex, and showed that the whole sequence generated by \eqref{nest1} is convergent by assuming that the extrapolation coefficients $\beta_{k}$ satisfy $0\le\beta_k\leq\bar{\beta}$ for some $\bar{\beta}<1$. Moreover, by imposing uniqueness of the optimal solution together with a technical assumption, they showed that the sequence generated by \eqref{nest1} is locally linearly convergent when applied to the LASSO for a particular choice of $\{\beta_k\}$.

Despite the rich literature, we note that the local linear convergence of \eqref{nest1} is only established for a certain type of convex problems with {\em unique} optimal solutions for some specific choices of $\{\beta_k\}$, which can be restrictive for practical applications. Thus, in this paper, we further study the behavior of the sequence $\{x^k\}$ generated by \eqref{nest1}. Specifically, we discuss local linear convergence under more general conditions in the possibly nonconvex case.

In details, under the same error bound condition used in \cite{Luo1993} for analyzing convergence of the proximal gradient algorithm, we show that there is a threshold $\widetilde\beta$ depending on $f$ so that if $\sup_k\beta_k < \widetilde\beta$, then the sequence $\{x^k\}$ generated by \eqref{nest1} converges $R$-linearly to a stationary point of \eqref{compro} and the sequence of the objective value $\{F(x^k)\}$ is also $R$-linearly convergent. In particular, if $f$ is in addition convex, then $\widetilde\beta$ reduces to $1$ and we can conclude that the sequence $\{x^k\}$ generated by FISTA with fixed restart is $R$-linearly convergent to an optimal solution of \eqref{compro}; see Section~\ref{sec3.3}. The error bound condition is satisfied for a wide range of problems including the LASSO, and hence our linear convergence result concerning \eqref{nest1} with a fixed $\mu$ is more general than those discussed in \cite{Johnstone2015}.


The rest of this paper is organized as follows. Section~\ref{sec2} presents some basic notation and preliminary materials. In Section~\ref{sec3}, we establish linear convergence of the iterates generated by the proximal gradient algorithm with extrapolation under the same error bound condition used in \cite{Luo1993}. Linear convergence of the corresponding sequence of function values is also established. FISTA with restart is discussed in Section~\ref{sec3.3}. In Section 4, we perform numerical experiments to illustrate our results.

\section{Notation and preliminaries}\label{sec2}

Throughout this paper, we use $\R^n$ to denote the $n$-dimensional Euclidean space, with its standard inner product denoted by $\langle\cdot,\cdot\rangle$. The Euclidean norm is denoted by $\|\cdot\|$, the $\ell_1$ norm is denoted by $\|\cdot\|_1$ and the $\ell_\infty$ norm is denoted by $\|\cdot\|_\infty$. The vector of all ones is denoted by $e$, whose dimension should be clear from the context. For a matrix $A\in \R^{m\times n}$, we use $A^\top$ to denote its transpose. Finally, for a symmetric matrix $A\in \R^{n\times n}$, we use $\lambda_{\max}(A)$ and $\lambda_{\min}(A)$ to denote its largest and smallest eigenvalue, respectively.

For a nonempty closed set $\mathcal{C} \subseteq \R^n$, its indicator function is defined by
\begin{eqnarray*}
\delta_{\mathcal{C}}(x)=\left\{\begin{array}{ll}
0      &{\rm if} ~~x\in \mathcal{C},\\
+\infty  &{\rm if} ~~x \notin \mathcal{C}.
\end{array}\right.
\end{eqnarray*}
Moreover, we use $\mathrm{dist}(x,\mathcal{C})$ to denote the distance from $x$ to $\mathcal{C}$, where $\mathrm{dist}(x,\mathcal{C})=\inf_{y\in\mathcal{C}}\|x-y\|$. When $\cal C$ is in addition convex, we use ${\rm Proj}_{\cal C}(x)$ to denote the unique closest point on $\mathcal{C}$ to $x$.

The domain of an extended-real-valued function $h: \R^n\to [-\infty,\infty]$ is defined as $\dom h = \left\{x\in \R^{n}:\;  h(x) < +\infty \right\} $. We say that $h$ is proper if it never equals $-\infty$ and $\dom h\neq \emptyset$. Such a function is closed if it is lower semicontinuous.
A proper closed function $h$ is said to be level bounded if the lower level sets of $h$ are bounded, i.e., the set $\left\{x\in\R^{n}:\;  h(x)\leq r \right\}$ is bounded for any $r\in \R$.
For a proper closed convex function $h:{{\mathbb{R}}^{n}}\to \mathbb{R}\cup \{\infty\}$, the subdifferential of $h$ at $x \in \dom h$ is given by
\begin{equation*}
\partial h(x)=\left\{\xi\in{\mathbb{R}}^{n}:\;  h(u)-h(x)-\langle \xi, u-x \rangle \geq 0,\ \forall  u\in {\mathbb{R}}^{n}\right\}.
\end{equation*}
We use $\mathrm{Prox}_{h}(v)$ to denote the proximal operator of a proper closed convex function $h$ at any $v \in {\mathbb{R}}^{n}$, i.e.:
\begin{equation*}
\mathrm{Prox}_{h}(v)= \mathop{\argmin}_{x\in {{\mathbb{R}}^{n}}}\left\{h(x)+\frac{1}{2}\|x-v\|^{2}\right\}.
\end{equation*}
We note that this operator is well defined for any $v\in \R^n$, and we refer the readers to \cite[Chapter~1]{Parikh2013} for properties of the proximal operator.

For an optimal solution $\hat x$ of \eqref{compro}, the following first-order necessary condition always holds, thanks to \cite[Exercise 8.8(c)]{Rockafellar1998}:
\begin{equation}\label{stationary}
0\in \nabla f(\hat{x}) + \partial g(\hat{x}),
\end{equation}
where $\nabla f$ denotes the gradient of $f$.
We say that $\tilde{x}$ is a stationary point of \eqref{compro} if $\tilde{x}$ satisfies \eqref{stationary} in place of $\hat{x}$; in particular, any optimal solution $\hat x$ of \eqref{compro} is a stationary point of \eqref{compro}. We use $\mathcal{X}$ to denote the set of stationary points of $F$.

Finally, we recall two notions of (local) linear convergence, which will be used in our convergence analysis.
For a sequence $\left\{x^{k}\right\}$, we say that $\left\{x^{k}\right\}$ converges $Q$-linearly to $x^{*}$ if there exist $c \in (0,1)$ and $k_0 > 0$ such that
\begin{equation*}
\|x^{k+1}-x^{*}\|\le c\|x^{k}-x^{*}\|, ~~~\forall k\ge k_0;
\end{equation*}
and we say that $\left\{x^{k}\right\}$ converges $R$-linearly to $x^{*}$ if
\begin{equation*}
\begin{split}
&\limsup\limits_{k\rightarrow \infty}\|x^{k}-x^{*}\|^{\frac{1}{k}}<1.
\end{split}
\end{equation*}
We state the following simple fact relating the two notions of linear convergence, which is an immediate consequence of the definitions of $Q$- and $R$-linear convergence. We will use this fact in our convergence analysis.

\begin{lemma}\label{prelim}
Suppose that $\{a_{k}\}$ and $\{b_{k}\}$ are two sequences in $\R$ with $0\le b_k \le a_k$ for all $k$, and $\{a_{k}\}$ is Q-linearly convergent to zero. Then $\{b_k\}$ is R-linearly convergent to zero.
\end{lemma}

\section{Convergence analysis of the proximal gradient algorithm with extrapolation}\label{sec3}

In this section, we present the proximal gradient algorithm with extrapolation for solving \eqref{compro}, and discuss the convergence behavior of the sequence generated by the algorithm.

We recall that in our problem \eqref{compro}, the function $g$ is proper closed convex and $f$ has a Lipschitz continuous gradient; moreover, $\inf F > -\infty$ and $\mathcal{X}\neq \emptyset$. Furthermore, we observe that
any function $f$ whose gradient is Lipschitz continuous can be written as $f= f_1-f_2$, where $f_{1}$ and $f_{2}$ are two convex functions with Lipschitz continuous gradients. For instance, one can decompose $f$ as
\[
f(x) = \underbrace{f(x) + \frac{c}2 \|x\|^2}_{f_1(x)} - \underbrace{\frac{c}2 \|x\|^2}_{f_2(x)},
\]
for any $c \ge L_f$, where $L_f$ is a Lipschitz continuity modulus of $\nabla f$. It is then routine to show that both $f_1$ and $f_2$ are convex functions with Lipschitz continuous gradients.

Thus, without loss of generality, from now on, we assume that $f = f_1 - f_2$ for some convex functions $f_1$ and $f_2$ with Lipschitz continuous gradients. For concreteness, we denote a Lipschitz continuity modulus of $\nabla f_1$ by $L > 0$, and a Lipschitz continuity modulus of $\nabla f_2$ by $l \ge 0$. Moreover, by taking a larger $L$ if necessary, we assume throughout that $L\ge l$. Then it is not hard to show that $\nabla f$ is Lipschitz continuous with a modulus $L$.

We are now ready to present our proximal gradient algorithm with extrapolation.

\begin{center}
\fbox{\parbox{5in}{\vspace{1mm}
~\textbf{Algorithm 1}: Proximal gradient algorithm with extrapolation
\begin{description}
\item{\textbf{Input}:} $x^{0}\in \dom g$, $\{\beta_{k}\}\subseteq \left[0,\sqrt{\frac{L}{L+l}}\,\right]$. Set $x^{-1}=x^{0}$.

\item \hspace{5mm} ~\textbf{for} $k=0,1,2,\cdots$ ~\textbf{do}\vspace{-1mm}
\begin{equation}\label{iteratePG}
\begin{split}
&y^{k}=x^{k}+\beta_{k}(x^{k}-x^{k-1}),\\
&x^{k+1}={\rm Prox}_{\frac1L g}\left(y^k - \frac1L \nabla f(y^k)\right).\\
\end{split}
\end{equation}
\item \vspace{0mm}\hspace{5mm} \textbf{end for}
\end{description}}}
\end{center}

We shall discuss the convergence behavior of Algorithm 1. We note first that it is immedidate from the definition of the proximal operator that the $x$-update in \eqref{iteratePG} is equivalently given by
\begin{equation}\label{iterateProx}
  x^{k+1} = \argmin_{x\in {\R^n}}\left\{\langle\nabla f(y^k),x\rangle + \frac{L}{2}\|x - y^k\|^2 + g(x)\right\}.
\end{equation}
This fact will be used repeatedly in our convergence analysis below. Our analysis also relies heavily on the following auxiliary sequence:
\begin{equation}\label{Hdefinition}
H_{k,\alpha}=F(x^{k})+\alpha\|x^{k}-x^{k-1}\|^{2},
\end{equation}
for a fixed $\alpha\in[\frac{L+l}{2}\bar\beta^{2},\frac{L}{2}]$ with $\bar\beta := \sup_k \beta_k$, where $\{x^k\}$ is generated by Algorithm 1. We study the convergence properties of $\{H_{k,\alpha}\}$ in Section~\ref{sec3.1}. The results will then be used in subsequent subsections for analyzing the convergence of $\{x^k\}$ and $\{F(x^k)\}$. The auxiliary sequence \eqref{Hdefinition} was also used in \cite{Attouch2015,Chambolle2014,Johnstone2015} for analyzing \eqref{nest1}.

\subsection{Auxiliary lemmas}\label{sec3.1}
We start by showing that $\{H_{k,\alpha}\}$ is nonincreasing and convergent.
\begin{lemma}\label{sucgo0}Let $\{x^{k}\}$ be a sequence generated by Algorithm 1 and $\alpha\in [\frac{L+l}{2}\bar\beta^{2},\frac{L}{2}]$. Then the following statements hold.
\begin{enumerate}[{\rm (i)}]
  \item For any $z\in \dom g$, we have
  \begin{equation}\label{rel1}
  F(x^{k+1}) \le F(z) + \frac{L+l}2\|z - y^k\|^2 - \frac{L}2\|x^{k+1}-z\|^2.
  \end{equation}
  \item It holds that for all $k$,
  \begin{equation}\label{Hdiffer}
H_{k+1,\alpha}-H_{k,\alpha}\le \left(-\frac{L}{2}+\alpha\right)\|x^{k+1}-x^{k}\|^{2}+\left(\frac{L+l}{2}\beta_{k}^{2}-\alpha\right)\|x^{k}-x^{k-1}\|^{2}.
  \end{equation}
  \item The sequence $\{H_{k,\alpha}\}$ is nonincreasing.
\end{enumerate}
\end{lemma}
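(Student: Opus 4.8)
The plan is to make part~(i) the workhorse of the lemma: once the descent estimate \eqref{rel1} is in hand, parts~(ii) and~(iii) follow by short substitutions. To establish \eqref{rel1}, I would combine two ingredients attached to the subproblem \eqref{iterateProx} that defines $x^{k+1}$. First, since $g$ is convex, the objective in \eqref{iterateProx} is $L$-strongly convex and $x^{k+1}$ is its unique minimizer; the standard ``minimizer inequality'' for an $L$-strongly convex function, applied against an arbitrary $z \in \dom g$, produces a bound on $g(x^{k+1}) - g(z)$ carrying the decisive term $-\frac{L}{2}\|z - x^{k+1}\|^2$ together with a $\frac{L}{2}\|z-y^k\|^2$ term and a linear term in $\nabla f(y^k)$. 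Second, because $\nabla f$ is Lipschitz with modulus $L$, the descent lemma bounds $f(x^{k+1})$ above by its linearization at $y^k$ plus $\frac{L}{2}\|x^{k+1}-y^k\|^2$.

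Adding these two bounds to form $F(x^{k+1}) = f(x^{k+1}) + g(x^{k+1})$, the two $\frac{L}{2}\|x^{k+1}-y^k\|^2$ contributions cancel and the two linear-in-$\nabla f(y^k)$ terms collapse to $\langle \nabla f(y^k), z - y^k\rangle$. This leaves $F(x^{k+1})$ bounded by $f(y^k) + \langle \nabla f(y^k), z - y^k\rangle + \frac{L}{2}\|z-y^k\|^2 + g(z) - \frac{L}{2}\|z-x^{k+1}\|^2$, so the only remaining task is to replace the linearization of $f$ at $y^k$ by $f(z)$ up to a controllable error.

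This last step is where the decomposition $f = f_1 - f_2$ matters, and it is the one point I expect to require care. Convexity of $f_1$ gives $f_1(y^k) + \langle \nabla f_1(y^k), z - y^k\rangle \le f_1(z)$ immediately. For the concave piece $-f_2$, convexity of $f_2$ is of no help here, as it would bound the linearization from below, i.e., in the wrong direction; instead I would apply the descent lemma to $f_2$ (whose gradient is $l$-Lipschitz) to obtain $-f_2(y^k) - \langle \nabla f_2(y^k), z - y^k\rangle \le -f_2(z) + \frac{l}{2}\|z - y^k\|^2$. Summing the two estimates yields $f(y^k) + \langle \nabla f(y^k), z - y^k\rangle \le f(z) + \frac{l}{2}\|z - y^k\|^2$, and substituting this back gives \eqref{rel1} with exactly the constant $\frac{L+l}{2}$.

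For part~(ii), I would specialize \eqref{rel1} to $z = x^k$; by \eqref{iteratePG} we have $x^k - y^k = -\beta_k(x^k - x^{k-1})$, so $\frac{L+l}{2}\|x^k - y^k\|^2 = \frac{L+l}{2}\beta_k^2\|x^k - x^{k-1}\|^2$. Feeding the resulting bound on $F(x^{k+1}) - F(x^k)$ into the difference $H_{k+1,\alpha} - H_{k,\alpha}$ read off from \eqref{Hdefinition}, and grouping the $\|x^{k+1}-x^k\|^2$ and $\|x^k-x^{k-1}\|^2$ terms, gives \eqref{Hdiffer}. Part~(iii) is then immediate: the bound $\alpha \le \frac{L}{2}$ makes the coefficient of $\|x^{k+1}-x^k\|^2$ nonpositive, while $\alpha \ge \frac{L+l}{2}\bar\beta^2 \ge \frac{L+l}{2}\beta_k^2$ (using $\beta_k \le \bar\beta$ for all $k$) makes the coefficient of $\|x^k-x^{k-1}\|^2$ nonpositive, whence $H_{k+1,\alpha} \le H_{k,\alpha}$.
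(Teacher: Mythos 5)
Your proposal is correct and follows essentially the same route as the paper's own proof: the strong-convexity (minimizer) inequality for the subproblem \eqref{iterateProx}, the descent lemma for $f$ at $y^k$, and then the key replacement of the linearization of $f$ by $f(z)+\frac{l}{2}\|z-y^k\|^2$ via convexity of $f_1$ and the descent lemma applied to $f_2$, followed by the same substitution $z=x^k$ for parts (ii) and (iii). In particular, your observation that convexity of $f_2$ bounds the linearization in the wrong direction, forcing the use of its $l$-Lipschitz gradient instead, is exactly the point the paper's argument turns on.
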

\begin{proof}
We first prove (i). Fix any $z\in \dom g$. Using the definition of $x^{k+1}$ in \eqref{iterateProx} and the strong convexity of the objective in the minimization problem \eqref{iterateProx}, we obtain upon rearranging terms that
\begin{equation}\label{AproxP}
\begin{split}
g(x^{k+1})\leq&\ g(z)+\langle-\nabla f(y^{k}), x^{k+1}-z\rangle+\frac{L}{2}\|z-y^{k}\|^{2}\\
&-\frac{L}{2}\|x^{k+1}-y^{k}\|^{2}-\frac{L}{2}\|x^{k+1}-z\|^{2}.
\end{split}
\end{equation}
On the other hand, using the fact that $\nabla f$ is Lipschitz continuous with a Lipschitz continuity modulus $L$, we have
\begin{equation}\label{fLip}
f(x^{k+1})\leq f(y^k)+\langle\nabla f(y^k), x^{k+1}-y^k\rangle+\frac{L}{2}\|x^{k+1}-y^k\|^{2}.
\end{equation}
Summing \eqref{AproxP} and \eqref{fLip}, we see further that
\begin{equation}\label{sumfg}
\begin{split}
f(x^{k+1})+g(x^{k+1})\leq&\ f(y^{k})+g(z)+\langle\nabla f(y^{k}), z-y^{k}\rangle\\
&+\frac{L}{2}\|z-y^{k}\|^{2}-\frac{L}{2}\|x^{k+1}-z\|^{2}.
\end{split}
\end{equation}
Next, recall that $f=f_{1}-f_{2}$. Hence, we have
\begin{equation}\label{fcompose}
\begin{split}
&f(y^{k})+\langle\nabla f(y^{k}), z-y^{k}\rangle\\
&=f_{1}(y^{k})-f_{2}(y^{k})+\langle\nabla f_{1}(y^{k}), z-y^{k}\rangle-\langle\nabla f_{2}(y^{k}), z-y^{k}\rangle.
\end{split}
\end{equation}
Since $f_{1}$ and $f_{2}$ are convex and their gradients are Lipschitz continuous with moduli $L$ and $l$, respectively, the following two inequalities hold.
\begin{equation*}
\begin{split}
f_{1}(y^{k})+\langle\nabla f_{1}(y^{k}), z-y^{k}\rangle&\leq f_{1}(z), \\
f_{2}(z)-f_{2}(y^{k})-\langle\nabla f_{2}(y^{k}), z-y^{k}\rangle&\leq\frac{l}{2}\|z-y^{k}\|^{2}.
\end{split}
\end{equation*}
Combining these relations with \eqref{fcompose} and recalling that $f = f_1 - f_2$, we see further that
\begin{equation}\label{fcompose1}
f(y^{k})+\langle\nabla f(y^{k}), z-y^{k}\rangle\leq f(z)+\frac{l}{2}\|z-y^{k}\|^{2}.
\end{equation}
Summing \reff{sumfg} and \reff{fcompose1}, and recalling that $F = f + g$, we obtain \eqref{rel1} immediately. This proves (i).

We now prove (ii). We note first from the definition of the $y$-update in \eqref{iteratePG} that $y^k - x^k = \beta_k(x^k - x^{k-1})$. Using this and \eqref{rel1} with $z = x^k$, we obtain that
\begin{equation*}
F(x^{k+1})-F(x^{k}) \leq \frac{L+l}{2}\beta_{k}^{2}\|x^{k}-x^{k-1}\|^{2}-\frac{L}{2}\|x^{k+1}-x^{k}\|^{2}.
\end{equation*}
From this and the definition of $H_{k,\alpha}$ from \eqref{Hdefinition}, we see further that
\begin{equation*}
\begin{split}
&H_{k+1,\alpha}-H_{k,\alpha}=F(x^{k+1})+\alpha\|x^{k+1}-x^{k}\|^{2}-F(x^{k})-\alpha\|x^{k}-x^{k-1}\|^{2}\\
&\leq -\frac{L}{2}\|x^{k+1}-x^{k}\|^{2}+\frac{L+l}{2}\beta_{k}^{2}\|x^{k}-x^{k-1}\|^{2}+\alpha\|x^{k+1}-x^{k}\|^{2}-\alpha\|x^{k}-x^{k-1}\|^{2}\\
&= \left(-\frac{L}{2}+\alpha\right)\|x^{k+1}-x^{k}\|^{2}+\left(\frac{L+l}{2}\beta_{k}^{2}-\alpha\right)\|x^{k}-x^{k-1}\|^{2},
\end{split}
\end{equation*}
which is just \eqref{Hdiffer}. This proves (ii). Finally, since $\frac{L+l}{2}\bar\beta^{2}\leq\alpha\leq\frac{L}{2}$ by our assumption, we have
\begin{equation*}
-\frac{L}{2}+\alpha \leq 0, \ {\rm and}\  \frac{L+l}{2}\beta_{k}^{2}-\alpha \leq \frac{L+l}{2}\bar\beta^{2} - \alpha \le 0\ \ \forall k.
\end{equation*}
Consequently, $H_{k+1,\alpha}-H_{k,\alpha} \leq 0$, i.e., $\{H_{k,\alpha}\}$ is nonincreasing. This completes the proof.
\end{proof}

The following result is an immediate consequence of Lemma~\ref{sucgo0}.
\begin{corollary}\label{xbound}The sequence $\{x^{k}\}$ generated by Algorithm 1 is bounded if $F$ is level bounded.
\end{corollary}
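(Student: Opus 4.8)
The plan is to leverage the monotonicity of the auxiliary sequence $\{H_{k,\alpha}\}$ established in Lemma~\ref{sucgo0}(iii) to obtain a \emph{uniform} upper bound on the objective values $F(x^k)$, and then to invoke the level boundedness of $F$ to conclude. The underlying idea is simply that every iterate is trapped in a single sublevel set of $F$.

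First I would fix any admissible $\alpha\in[\frac{L+l}{2}\bar\beta^{2},\frac{L}{2}]$; such an $\alpha$ exists because $\bar\beta\le\sqrt{L/(L+l)}$ forces $\frac{L+l}{2}\bar\beta^{2}\le\frac{L}{2}$, so the interval is nonempty, and moreover $\alpha\ge 0$. By Lemma~\ref{sucgo0}(iii) the sequence $\{H_{k,\alpha}\}$ is nonincreasing, hence $H_{k,\alpha}\le H_{0,\alpha}$ for every $k$. Using the initialization $x^{-1}=x^{0}$ in Algorithm~1, the definition \eqref{Hdefinition} collapses to $H_{0,\alpha}=F(x^{0})+\alpha\|x^{0}-x^{-1}\|^{2}=F(x^{0})$.

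Next, since $\alpha\ge 0$, the quadratic term in \eqref{Hdefinition} is nonnegative, so $F(x^{k})\le F(x^{k})+\alpha\|x^{k}-x^{k-1}\|^{2}=H_{k,\alpha}\le F(x^{0})$ for all $k$. Consequently every $x^{k}$ lies in the sublevel set $\{x\in\R^{n}:\;F(x)\le F(x^{0})\}$. If $F$ is level bounded, then by definition this set is bounded, and therefore $\{x^{k}\}$ is bounded.

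There is essentially no serious obstacle here, as the corollary is an immediate consequence of Lemma~\ref{sucgo0}. The only points worth verifying carefully are that $\alpha$ is nonnegative—so that discarding the momentum term $\alpha\|x^{k}-x^{k-1}\|^{2}$ preserves the inequality—and that the initialization $x^{-1}=x^{0}$ reduces $H_{0,\alpha}$ to $F(x^{0})$; both are immediate.
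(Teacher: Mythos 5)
Your proposal is correct and follows essentially the same route as the paper: the paper simply takes $\alpha=\frac{L}{2}$, uses the monotonicity of $\{H_{k,\frac{L}{2}}\}$ from Lemma~\ref{sucgo0} to get $F(x^k)\le H_{k,\frac{L}{2}}\le H_{0,\frac{L}{2}}<\infty$, and concludes by level boundedness. Your extra observations (that any admissible $\alpha$ works and that $H_{0,\alpha}=F(x^0)$ by the initialization $x^{-1}=x^0$) are valid but not needed beyond what the paper does.
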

\begin{proof}
From Lemma \ref{sucgo0}, the sequence $\{H_{k,\frac{L}2}\}$ is nonincreasing. This together with the definition of $H_{k,\frac{L}2}$ implies that
\begin{equation*}
F(x^{k})\leq H_{k,\frac{L}2}\leq H_{0,\frac{L}2}<\infty.
\end{equation*}
Since $F$ is level bounded by assumption, we conclude that $\{x^{k}\}$ is bounded.
\end{proof}

\begin{lemma}\label{ssucgo0}Let $\{x^{k}\}$ be a sequence generated by Algorithm 1, and $\alpha\in [\frac{L+l}{2}\bar\beta^{2},\frac{L}{2}]$. Then the following statements hold.
\begin{enumerate}[{\rm (i)}]
\item The sequence $\{H_{k,\alpha}\}$ is convergent.
\item $\sum_{k=0}^{\infty}\left(\alpha-\frac{L+l}{2}\beta_{k+1}^{2}\right)\|x^{k+1}-x^{k}\|^{2} < \infty$.
\end{enumerate}
\end{lemma}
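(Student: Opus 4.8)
The plan is to prove (i) first, since the convergence it provides will be used to bound the partial sums in (ii). For (i), I would observe that $\{H_{k,\alpha}\}$ is already known to be nonincreasing by Lemma~\ref{sucgo0}(iii), so it remains only to exhibit a lower bound. From the definition \eqref{Hdefinition} and the nonnegativity of the quadratic term, I have
\[
H_{k,\alpha} = F(x^k) + \alpha\|x^k - x^{k-1}\|^2 \ge F(x^k) \ge \inf F > -\infty,
\]
where the last inequality is one of the standing assumptions on \eqref{compro}. A nonincreasing sequence that is bounded below converges, which settles (i).

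For (ii), I would start from \eqref{Hdiffer} and rewrite it as a lower bound on the decrement. Since $\alpha \in [\frac{L+l}{2}\bar\beta^2, \frac{L}{2}]$, the proof of Lemma~\ref{sucgo0} already establishes that both coefficients $\frac{L}{2} - \alpha$ and $\alpha - \frac{L+l}{2}\beta_k^2$ are nonnegative, so \eqref{Hdiffer} rearranges to
\[
H_{k,\alpha} - H_{k+1,\alpha} \ge \left(\frac{L}{2}-\alpha\right)\|x^{k+1}-x^k\|^2 + \left(\alpha - \frac{L+l}{2}\beta_k^2\right)\|x^k - x^{k-1}\|^2 \ge 0.
\]
Summing this from $k=0$ to $N$, the left-hand side telescopes to $H_{0,\alpha} - H_{N+1,\alpha}$, which is bounded above uniformly in $N$ because $\{H_{k,\alpha}\}$ converges, and is therefore bounded below, by part (i). Letting $N \to \infty$, the resulting series of nonnegative terms converges; in particular, extracting its second component gives
\[
\sum_{k=0}^\infty \left(\alpha - \frac{L+l}{2}\beta_k^2\right)\|x^k - x^{k-1}\|^2 < \infty.
\]

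The final step is a reindexing to match the index conventions of the statement. Shifting $k \to k+1$ turns the displayed series into $\sum_{k=-1}^\infty \left(\alpha - \frac{L+l}{2}\beta_{k+1}^2\right)\|x^{k+1} - x^k\|^2$, and the $k=-1$ term vanishes because $x^{-1} = x^0$ by the initialization in Algorithm~1. This yields precisely the sum in (ii). I do not anticipate any substantial obstacle in this argument; the only point needing care is aligning the index of $\beta$ in the claim ($\beta_{k+1}$) with the index appearing in \eqref{Hdiffer} ($\beta_k$), which the shift handles once the vanishing boundary term is accounted for.
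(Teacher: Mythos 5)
Your proof is correct and follows essentially the same route as the paper: part (i) combines the monotonicity from Lemma~\ref{sucgo0} with the lower bound $H_{k,\alpha}\ge F(x^k)\ge \inf F>-\infty$, and part (ii) telescopes \eqref{Hdiffer} and passes to the limit using the convergence of $\{H_{k,\alpha}\}$. The only (harmless) differences are that you keep both nonnegative terms in the telescoped sum before extracting the relevant one, and you make explicit the index shift and the vanishing boundary term via $x^{-1}=x^0$, a step the paper leaves implicit.
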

\begin{proof}
Recall that $\inf F > -\infty$. Hence, $H_{k,\alpha}=F(x^{k})+\alpha\|x^{k}-x^{k-1}\|^{2}$ is bounded from below. This together with the fact that $\{H_{k,\alpha}\}$ is nonincreasing from Lemma~\ref{sucgo0} implies that $\{H_{k,\alpha}\}$ is convergent. This proves (i).

We now prove (ii).
Since $-\frac{L}{2}+\alpha \le 0$, we have from \reff{Hdiffer} that
\begin{equation}\label{less1}
H_{k+1,\alpha}-H_{k,\alpha} \leq -\left(\alpha-\frac{L+l}{2}\beta_{k}^{2}\right)\|x^{k}-x^{k-1}\|^{2}.
\end{equation}
Summing both sides of \eqref{less1} from $1$ to $N$, we see further that
\begin{equation}\label{sumcon}
0\le \sum_{k=1}^{N}\left(\alpha-\frac{L+l}{2}\beta_{k}^{2}\right)\|x^{k}-x^{k-1}\|^{2} \leq \sum_{k=1}^{N}(H_{k,\alpha}-H_{k+1,\alpha})=H_{1,\alpha}-H_{N+1,\alpha},
\end{equation}
where the nonnegativity follows from the fact that $\alpha \ge \frac{L+l}{2}\bar\beta^2\ge \frac{L+l}{2}\beta_k^2$ for all $k$.
Since $\{H_{k,\alpha}\}$ is convergent by (i), letting $N\rightarrow \infty$ in \eqref{sumcon}, we conclude that
the infinite sum exists and is finite, i.e.,
\begin{equation*}
\sum_{k=1}^{\infty}\left(\alpha-\frac{L+l}{2}\beta_{k}^{2}\right)\|x^{k}-x^{k-1}\|^{2} < \infty.
\end{equation*}
This completes the proof.
\end{proof}

In the next lemma, we show that when $\{\beta_k\}$ is chosen below a certain threshold, then any accumulation point of the sequence $\{x^k\}$ generated by Algorithm 1, if exists, is a stationary point of $F$. This result has been established in \cite{Johnstone2015} when the function $f$ is convex. Indeed, in the convex case, it was shown in \cite[Theorem~4.1]{Johnstone2015} that the whole sequence $\{x^k\}$ is convergent. However, the following convergence result is new when the function $f$ is nonconvex.
\begin{lemma}\label{sucgo01}Suppose that $\bar\beta<\sqrt{\frac{L}{L+l}}$ and $\{x^{k}\}$ is a sequence generated by Algorithm 1. Then the following statements hold.
\begin{enumerate}[{\rm (i)}]
\item $\sum_{k=0}^{\infty} \|x^{k+1}-x^{k}\|^{2}<\infty$.
\item Any accumulation point of $\{x^{k}\}$ is a stationary point of $F$.
\end{enumerate}
\end{lemma}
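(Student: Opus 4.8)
The plan is to obtain part (i) directly from Lemma~\ref{ssucgo0}(ii) by exploiting the \emph{strict} inequality $\bar\beta<\sqrt{L/(L+l)}$, and then to use the resulting asymptotic regularity to pass to the limit in the optimality condition of the prox-subproblem for part (ii). For part (i), observe that $\bar\beta<\sqrt{L/(L+l)}$ is equivalent to $\frac{L+l}{2}\bar\beta^{2}<\frac{L}{2}$, so the interval $[\frac{L+l}{2}\bar\beta^{2},\frac{L}{2}]$ is nondegenerate and I may fix $\alpha=\frac{L}{2}$. Since $\beta_{k+1}^{2}\le\bar\beta^{2}$ for every $k$, the constant $c:=\frac{L}{2}-\frac{L+l}{2}\bar\beta^{2}$ is positive and furnishes the uniform lower bound $\alpha-\frac{L+l}{2}\beta_{k+1}^{2}\ge c>0$. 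Invoking Lemma~\ref{ssucgo0}(ii) then gives
\[
c\sum_{k=0}^{\infty}\|x^{k+1}-x^{k}\|^{2}\le\sum_{k=0}^{\infty}\Bigl(\alpha-\tfrac{L+l}{2}\beta_{k+1}^{2}\Bigr)\|x^{k+1}-x^{k}\|^{2}<\infty,
\]
and dividing through by $c$ yields (i).

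For part (ii), I would first deduce from (i) that $\|x^{k+1}-x^{k}\|\to0$, and hence, using the $y$-update $y^{k}-x^{k}=\beta_{k}(x^{k}-x^{k-1})$ together with $0\le\beta_{k}\le\bar\beta$, also $\|y^{k}-x^{k}\|\to0$. Let $x^{*}$ be any accumulation point of $\{x^{k}\}$, so that $x^{k_{j}}\to x^{*}$ along some subsequence. Since consecutive iterates, and the corresponding $y$-iterates, differ by vanishing amounts, it follows that $x^{k_{j}-1}\to x^{*}$ and $y^{k_{j}-1}\to x^{*}$ as well. The key structural fact is the first-order optimality condition for the strongly convex subproblem \eqref{iterateProx} defining $x^{k}$ (i.e.\ \eqref{iterateProx} with $k-1$ in place of $k$), namely
\[
-\nabla f(y^{k-1})-L(x^{k}-y^{k-1})\in\partial g(x^{k}).
\]
Evaluating this inclusion along the subsequence and invoking the (Lipschitz, hence) continuity of $\nabla f$, the left-hand side converges to $-\nabla f(x^{*})$, while the base point $x^{k_{j}}$ of the subgradient converges to $x^{*}$.

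The remaining, and main, step is to pass to the limit in this subgradient inclusion, for which I would use the closedness of the graph of $\partial g$ for a proper closed convex function: if $u^{j}\to u$, $\xi^{j}\in\partial g(u^{j})$ and $\xi^{j}\to\xi$, then $\xi\in\partial g(u)$. Concretely, writing the subgradient inequality $g(v)\ge g(x^{k_{j}})+\langle\xi^{k_{j}},v-x^{k_{j}}\rangle$ for an arbitrary $v$, where $\xi^{k_{j}}:=-\nabla f(y^{k_{j}-1})-L(x^{k_{j}}-y^{k_{j}-1})$, and taking $\liminf_{j\to\infty}$, the inner-product term converges to $\langle-\nabla f(x^{*}),v-x^{*}\rangle$ while the lower semicontinuity of $g$ gives $\liminf_{j}g(x^{k_{j}})\ge g(x^{*})$; this produces $g(v)\ge g(x^{*})+\langle-\nabla f(x^{*}),v-x^{*}\rangle$ for all $v$, i.e.\ $-\nabla f(x^{*})\in\partial g(x^{*})$, which is exactly the stationarity condition \eqref{stationary}. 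I expect this limit-passing to be the delicate part, as it is where the index shift (so that $y^{k_{j}-1}$ rather than $y^{k_{j}}$ pairs with $x^{k_{j}}$) and the lower semicontinuity of $g$ must both be handled carefully; the uniform bound $0\le\beta_{k}\le\bar\beta$ supplied by Algorithm~1 is precisely what lets the $y$-iterates track the $x$-iterates closely enough to close the argument.
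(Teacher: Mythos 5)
Your proposal is correct and follows essentially the same route as the paper: part (i) is deduced from Lemma~\ref{ssucgo0}(ii) by using the strict inequality $\bar\beta<\sqrt{L/(L+l)}$ to bound the coefficients $\alpha-\frac{L+l}{2}\beta_{k}^{2}$ away from zero (the paper fixes $\alpha$ in the open interval $(\frac{L+l}{2}\bar\beta^{2},\frac{L}{2})$ rather than $\alpha=\frac{L}{2}$, an immaterial difference), and part (ii) passes to the limit in the first-order optimality condition of the prox-subproblem using the closedness of the graph of $\partial g$. The only cosmetic deviations are that the paper writes the inclusion at $x^{k_i+1}$ whereas you shift indices to write it at $x^{k_j}$, and that you spell out the graph-closedness of $\partial g$ via the subgradient inequality and lower semicontinuity of $g$ where the paper simply cites a reference.
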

\begin{proof}
Since $\bar\beta<\sqrt{\frac{L}{L+l}}$, one can choose $\alpha \in(\frac{L+l}{2}\bar\beta^{2},\frac{L}{2})$. Then $\frac{L+l}{2}\beta_k^{2}\le\frac{L+l}{2}\bar\beta^{2}<\alpha$ for all $k$, and the conclusion in (i) follows immediately from Lemma~\ref{ssucgo0} (ii).

We next prove (ii). Let $\bar{x}$ be an accumulation point. Then there exists a subsequence $\{x^{k_{i}}\}$ such that $\lim\limits_{i\to\infty}x^{k_{i}}= \bar{x}$. Using the first-order optimality condition of the minimization problem \eqref{iterateProx}, we obtain
\begin{equation*}
-L(x^{k_{i}+1}-y^{k_{i}})\in \nabla f(y^{k_{i}})+\partial g(x^{k_{i}+1}).
\end{equation*}
Combining this with the definition of $y^{k_{i}}$, which is $y^{k_{i}}=x^{k_{i}}+\beta_{k_{i}}(x^{k_{i}}-x^{k_{i}-1})$, we see further that
\begin{equation}\label{Optimali1}
-L[(x^{k_{i}+1}-x^{k_{i}})-\beta_{k_{i}}(x^{k_{i}}-x^{k_{i}-1})]\in \nabla f(y^{k_{i}})+\partial g(x^{k_{i}+1}).
\end{equation}
Passing to the limit in \eqref{Optimali1}, and invoking $\|x^{k_{i}+1}-x^{k_{i}}\|\to 0$ from (i) together with the continuity of $\nabla f$ and the closedness of $\partial g$ (see, for example, \cite[Page~80]{BL06}), we have
\begin{equation*}
0\in \nabla f(\bar{x})+\partial g(\bar{x}),
\end{equation*}
meaning that $\bar{x}$ is a stationary point of $F$. This completes the proof.
\end{proof}

Let $\Omega$ be the set of accumulation points of the sequence $\{x^{k}\}$ generated by Algorithm 1.
Then, from Corollary~\ref{xbound} and Lemma~\ref{sucgo01} $\rm(ii)$, we have $\emptyset \neq \Omega \subseteq\mathcal{X}$ when $F$ is level bounded. We prove in the next proposition that $F$ is constant over $\Omega$ if $\{\beta_k\}$ is chosen below a certain threshold. Since $F$ is only assumed to be lower semicontinuous, this conclusion is nontrivial when $F$ has stationary points that are not globally optimal.

\begin{proposition}\label{Hconstant}Suppose that $\bar\beta<\sqrt{\frac{L}{L+l}}$ and $\{x^{k}\}$ is a sequence generated by Algorithm 1 with its set of accumulation points denoted by $\Omega$. Then $\zeta:=\lim\limits_{k\to\infty}F(x^k)$ exists and $F\equiv \zeta$ on $\Omega$.
\end{proposition}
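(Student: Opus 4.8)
The plan is to split the statement into two parts: existence of the limit $\zeta$, and constancy of $F$ over $\Omega$. For the first part, since $\bar\beta<\sqrt{\frac{L}{L+l}}$, I can fix some $\alpha\in(\frac{L+l}{2}\bar\beta^2,\frac{L}{2})$, so that Lemma~\ref{ssucgo0}(i) guarantees that $\{H_{k,\alpha}\}$ converges, while Lemma~\ref{sucgo01}(i) gives $\sum_k\|x^{k+1}-x^k\|^2<\infty$, and hence $\|x^k-x^{k-1}\|\to 0$. Writing $F(x^k)=H_{k,\alpha}-\alpha\|x^k-x^{k-1}\|^2$ from \eqref{Hdefinition} and letting $k\to\infty$ then shows that $\zeta:=\lim_k F(x^k)$ exists and in fact equals $\lim_k H_{k,\alpha}$.

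For the constancy, I would fix an arbitrary $\bar x\in\Omega$ and a subsequence $x^{k_i}\to\bar x$. First I would record that $y^{k_i}=x^{k_i}+\beta_{k_i}(x^{k_i}-x^{k_i-1})\to\bar x$ and $x^{k_i+1}\to\bar x$, both consequences of $\|x^{k+1}-x^k\|\to 0$ together with $\beta_{k_i}\le\bar\beta$. One inequality, $F(\bar x)\le\zeta$, is immediate from the lower semicontinuity of $F=f+g$ (recall $f$ is continuous and $g$ is closed), since $F(\bar x)\le\liminf_i F(x^{k_i})=\zeta$. The reverse inequality is where the proximal structure enters: applying \eqref{rel1} with $z=\bar x$ and $k=k_i$ yields $F(x^{k_i+1})\le F(\bar x)+\frac{L+l}2\|\bar x-y^{k_i}\|^2-\frac L2\|x^{k_i+1}-\bar x\|^2$. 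Letting $i\to\infty$, the left-hand side tends to $\zeta$ while the right-hand side tends to $F(\bar x)$, since both quadratic terms vanish; this gives $\zeta\le F(\bar x)$. Combining the two inequalities yields $F(\bar x)=\zeta$, and since $\bar x\in\Omega$ was arbitrary, $F\equiv\zeta$ on $\Omega$.

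The step I expect to be the main obstacle, and the reason the statement is genuinely nontrivial as the preceding remark notes, is precisely the reverse inequality $\zeta\le F(\bar x)$. Because $g$ is only assumed lower semicontinuous, one cannot pass to the limit in $g(x^{k_i})$ to conclude $F(x^{k_i})\to F(\bar x)$ directly; lower semicontinuity alone only controls $F(\bar x)$ from above by $\zeta$. The upper estimate \eqref{rel1}, which originates from the minimality of $x^{k_i+1}$ in the proximal subproblem \eqref{iterateProx} (used here with the accumulation point $\bar x$ as a competitor), is exactly what supplies the matching upper bound on the limiting objective value. I would therefore take care that the convergence $y^{k_i}\to\bar x$ is genuinely available, which is why the \emph{strict} inequality $\bar\beta<\sqrt{\frac{L}{L+l}}$ (and hence $\|x^{k+1}-x^k\|\to 0$ via Lemma~\ref{sucgo01}(i)) is used rather than just $\bar\beta\le\sqrt{\frac{L}{L+l}}$.
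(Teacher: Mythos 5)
Your proof is correct and follows essentially the same strategy as the paper's: $\zeta$ exists because $\{H_{k,\alpha}\}$ converges and $\|x^{k+1}-x^k\|\to 0$, lower semicontinuity of $F$ gives $F(\bar x)\le \zeta$, and testing the minimality in the proximal subproblem against the accumulation point supplies the reverse inequality $\zeta\le F(\bar x)$, which lower semicontinuity alone cannot. The only (cosmetic) difference is that you invoke the already-established inequality \eqref{rel1} with $z=\bar x$ at index $k_i$, bounding $F(x^{k_i+1})$ and using that the full-sequence limit $\zeta$ exists, whereas the paper re-derives the estimate from the raw optimality of $x^{k_i}$ in \eqref{iterateProx} at index $k_i-1$ (inequality \eqref{uppersup}) and takes $\limsup_i F(x^{k_i})$; your variant is marginally shorter since \eqref{rel1} already packages the smoothness and gradient-continuity arguments that the paper redoes.
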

\begin{proof}
Fix any $\alpha\in (\frac{L+l}{2}\bar\beta^{2},\frac{L}{2})$, which exists because $\bar\beta<\sqrt{\frac{L}{L+l}}$.
Then, in view of Lemmas \ref{ssucgo0} and \ref{sucgo01}, the sequence $\{H_{k,\alpha}\}$ is convergent and $\|x^{k+1}-x^{k}\| \rightarrow 0$. These together with the definition of $H_{k,\alpha}$ imply that $\lim\limits_{k\rightarrow\infty}F(x^{k})$ exists. We denote this limit by $\zeta$.

We now show that $F\equiv\zeta$ on $\Omega$. If $\Omega = \emptyset$, then the conclusion holds trivially. Otherwise, take any $\hat{x}\in \Omega$. Then there exists a convergent subsequence $\{x^{k_{i}}\}$ with $\lim\limits_{i\rightarrow\infty}x^{k_{i}}= \hat{x}$. From the lower semicontinuity of $F$ and the definition of $\zeta$, we have
\begin{equation}\label{lowerinf}
\begin{split}
F(\hat{x}) &\leq \liminf\limits_{i\rightarrow\infty}F(x^{k_{i}})=\zeta.
\end{split}
\end{equation}
On the other hand, using the definition of $x^{k_i}$ as the minimizer in \eqref{iterateProx}, we see that
\begin{equation}\label{uppersup}
g(x^{k_{i}})+\langle \nabla f(y^{k_{i}-1}), x^{k_{i}}-\hat{x}\rangle +\frac{L}{2}\|x^{k_{i}}-y^{k_{i}-1}\|^{2} \leq g(\hat{x})+\frac{L}{2}\|\hat{x}-y^{k_{i}-1}\|^{2}.
\end{equation}
Adding $f(x^{k_{i}})$ to both sides of \eqref{uppersup}, we obtain further that
\begin{equation}\label{uppersup1}
f(x^{k_{i}})+g(x^{k_{i}})+\langle \nabla f(y^{k_{i}-1}), x^{k_{i}}-\hat{x}\rangle +\frac{L}{2}\|x^{k_{i}}-y^{k_{i}-1}\|^{2} \leq f(x^{k_{i}})+g(\hat{x})+\frac{L}{2}\|\hat{x}-y^{k_{i}-1}\|^{2}.
\end{equation}
Next, recall that $y^{k_{i}-1}=x^{k_{i}-1}+\beta_{k_{i}-1}(x^{k_{i}-1}-x^{k_{i}-2})$. Thus, we have
\begin{equation}\label{separate1}
\begin{split}
\|x^{k_{i}}-y^{k_{i}-1}\| &= \|x^{k_{i}}-x^{k_{i}-1}-\beta_{k_{i}-1}(x^{k_{i}-1}-x^{k_{i}-2})\|\\
&\leq \|x^{k_{i}}-x^{k_{i}-1}\|+\bar\beta\|x^{k_{i}-1}-x^{k_{i}-2}\|.
\end{split}
\end{equation}
In addition, we also have
\begin{equation}\label{separate2}
\begin{split}
\|\hat x-y^{k_{i}-1}\| &= \|\hat x-x^{k_{i}}+x^{k_{i}}-y^{k_{i}-1}\|\\
&\leq \|\hat x-x^{k_{i}}\|+\|x^{k_{i}}-y^{k_{i}-1}\|.
\end{split}
\end{equation}
Since $\|x^{k+1}-x^{k}\|\rightarrow 0$ and $\lim\limits_{i\to \infty}x^{k_{i}}=\hat{x}$, it follows from \eqref{separate1} and \eqref{separate2} that
\begin{equation*}
\|x^{k_{i}}-y^{k_{i}-1}\| \rightarrow 0\ {\rm and}\ \|\hat x-y^{k_{i}-1}\|\to 0,
\end{equation*}
and hence $\nabla f(y^{k_{i}-1}) \rightarrow \nabla f(\hat{x})$. From these and \reff{uppersup1}, we obtain that
\begin{equation}\label{upper1}
\zeta =\limsup\limits_{i\rightarrow\infty} F(x^{k_{i}})\leq F(\hat{x}).
\end{equation}
Thus $F(\hat{x})=\lim\limits_{i\rightarrow\infty} F(x^{k_{i}})= \zeta$ from \eqref{lowerinf} and \eqref{upper1}. Since $\hat{x}\in \Omega$ is arbitrary, we see that $F\equiv \zeta$ on $\Omega$. This completes the proof.
\end{proof}

\subsection{Linear convergence of $\{x^{k}\}$ and $\{F(x^k)\}$}\label{sec3.2}
In this subsection, we establish local linear convergence of $\{x^{k}\}$ and $\{F(x^k)\}$ under the following assumption.
\begin{assumption}\label{errorbound}
\begin{enumerate}[{\rm (i)}]
  \item {\bf(Error bound condition)} For any $\xi \geq  \inf_{x\in \R^n} F(x)$, there exist $\epsilon>0$ and $\tau>0$ such that
\[
{\rm dist}(x,\mathcal{X}) \leq \tau \left\|{\rm Prox}_{\frac1L g}\left(x - \frac1L\nabla f(x)\right)-x\right\|,
\]
whenever $\|{\rm Prox}_{\frac1L g}(x - \frac1L\nabla f(x))-x\|<\epsilon$ and $F(x)\leq \xi$.
 \item There exists $\delta>0$, such that $\|x-y\|\geq \delta$ whenever $x,y \in\mathcal{X}$, $F(x)\neq F(y)$.
\end{enumerate}
\end{assumption}
The above assumption has been used in the convergence analysis of many algorithms, including the gradient projection and block coordinate gradient descent method, etc; see, for example, \cite{Beck2006,Luo1992,Luo1993,Tseng1993,Tseng2008,Tseng2009,Tseng2010} and the references therein. The assumption consists of two parts: the first part is an error bound condition, while the second part states that when restricted to $\cal X$, the isocost surfaces of $F$ are properly separated. Under our blanket assumptions on $F$, Assumption~\ref{errorbound} is known to be satisfied for many choices of $f$ and $g$, including:
\begin{itemize}
  \item $f(x)=h(Ax)$, and $g$ is a polyhedral function, where $h$ is twice continuously differentiable on $\R^n$ with a Lipschitz continuous gradient, and on any compact convex set, $h$ is strongly convex; see, \cite[Theorem~2.1]{Luo1992} and \cite[Lemma~6]{Tseng2009}. This covers the well-known LASSO;
  \item $f$ is a possibly nonconvex quadratic function, and $g$ is a polyhedral function; see, for example, \cite[Theorem~4]{Tseng2009}.
\end{itemize}
The first example is convex, while the second one is possibly nonconvex.
We refer the readers to \cite{Tseng2009,Tseng2010,ZZS15} and the references therein for more examples and discussions on the error bound condition.

We next show that $\{H_{k,\alpha}\}$ is $Q$-linearly convergent under Assumption~\ref{errorbound}. Our analysis uses ideas from the proof of \cite[Theorem~2]{Tseng2009}, which studied a block coordinate gradient descent method.
\begin{lemma}\label{HQ1} Suppose that $\bar\beta<\sqrt{\frac{L}{L+l}}$, $\alpha\in (\frac{L+l}{2}\bar\beta^{2},\frac{L}{2})$ and that Assumption \ref{errorbound} holds.
Let $\{x^{k}\}$ be a sequence generated by Algorithm 1. Then the following statements hold.
\begin{enumerate}[{\rm (i)}]
  \item $\lim\limits_{k\rightarrow \infty}\mathrm{dist}(x^{k},\mathcal{X}) = 0$.

  \item The sequence $\{H_{k,\alpha}\}$ is $Q$-linearly convergent.
\end{enumerate}
\end{lemma}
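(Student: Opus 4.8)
The plan is to follow the error-bound strategy of Luo--Tseng, adapted to the extrapolated iteration. Throughout, write $T(x):=\mathrm{Prox}_{\frac1L g}(x-\frac1L\nabla f(x))$ for the prox-gradient map and $r(x):=T(x)-x$ for its residual, so that Assumption~\ref{errorbound}(i) reads $\mathrm{dist}(x,\mathcal{X})\le\tau\|r(x)\|$ once $\|r(x)\|$ is small and $F(x)\le\xi$. For (i) the first task is to control $\|r(x^k)\|$ by the successive changes of the iterates. Since $x^{k+1}=T(y^k)$, and the prox operator is nonexpansive while $\nabla f$ is $L$-Lipschitz, I would bound
\[
\|r(x^k)\| \le \|T(x^k)-T(y^k)\| + \|x^{k+1}-x^k\| \le 2\|x^k-y^k\| + \|x^{k+1}-x^k\|,
\]
which is at most $2\bar\beta\|x^k-x^{k-1}\|+\|x^{k+1}-x^k\|$ because $y^k-x^k=\beta_k(x^k-x^{k-1})$. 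By Lemma~\ref{sucgo01}(i) both $\|x^k-x^{k-1}\|$ and $\|x^{k+1}-x^k\|$ tend to $0$, so $\|r(x^k)\|\to0$; since also $F(x^k)\le H_{k,\alpha}\le H_{0,\alpha}=:\xi$ by monotonicity of $\{H_{k,\alpha}\}$ (Lemma~\ref{sucgo0}(iii)), the error bound applies for all large $k$ and yields $\mathrm{dist}(x^k,\mathcal{X})\le\tau\|r(x^k)\|\to0$, which is (i).

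For (ii) the target is a two-sided estimate valid for all large $k$. The sufficient-decrease bound from \eqref{Hdiffer} gives
\[
c_3\big(\|x^{k+1}-x^k\|^2+\|x^k-x^{k-1}\|^2\big)\le H_{k,\alpha}-H_{k+1,\alpha},
\]
with $c_3=\min\{\frac L2-\alpha,\ \alpha-\frac{L+l}{2}\bar\beta^2\}>0$, while the crux is an upper estimate $H_{k+1,\alpha}-\zeta\le C_1\big(\|x^{k+1}-x^k\|^2+\|x^k-x^{k-1}\|^2\big)$, where $\zeta=\lim_k F(x^k)=\lim_k H_{k,\alpha}$ by Proposition~\ref{Hconstant}. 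Writing $a_k:=H_{k,\alpha}-\zeta\ge0$ (nonnegative since $\{H_{k,\alpha}\}$ decreases to $\zeta$), these two bounds combine to $a_{k+1}\le\frac{C_1}{c_3}(a_k-a_{k+1})$, i.e.\ $a_{k+1}\le\frac{C_1}{C_1+c_3}a_k$, which is exactly $Q$-linear convergence of $\{H_{k,\alpha}\}$ to $\zeta$.

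The real work is the upper estimate. Let $\bar x^k\in\mathcal{X}$ be a projection of $x^k$ onto the closed nonempty stationary set $\mathcal{X}$, so $\|x^k-\bar x^k\|=\mathrm{dist}(x^k,\mathcal{X})$. Applying \eqref{rel1} with $z=\bar x^k$ gives $F(x^{k+1})\le F(\bar x^k)+\frac{L+l}{2}\|\bar x^k-y^k\|^2$, and $\|\bar x^k-y^k\|\le\mathrm{dist}(x^k,\mathcal{X})+\bar\beta\|x^k-x^{k-1}\|$ is already controlled by the successive changes via part~(i). Hence the upper estimate will follow once I show that $F(\bar x^k)=\zeta$ for all large $k$: then $F(x^{k+1})-\zeta$ is bounded by a constant times $\|x^{k+1}-x^k\|^2+\|x^k-x^{k-1}\|^2$, and adding $\alpha\|x^{k+1}-x^k\|^2$ produces $C_1$.

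To pin down $F(\bar x^k)$ I would combine Assumption~\ref{errorbound}(ii) with two opposite descent bounds. First, $\|\bar x^{k+1}-\bar x^k\|\le\mathrm{dist}(x^{k+1},\mathcal{X})+\|x^{k+1}-x^k\|+\mathrm{dist}(x^k,\mathcal{X})\to0$ by (i), so eventually $\|\bar x^{k+1}-\bar x^k\|<\delta$; since $\bar x^k,\bar x^{k+1}\in\mathcal{X}$, Assumption~\ref{errorbound}(ii) forces $F(\bar x^{k+1})=F(\bar x^k)$, i.e.\ $\{F(\bar x^k)\}$ is eventually a constant $\zeta'$. From \eqref{rel1} with $z=\bar x^k$ one gets $\zeta=\liminf_k F(x^{k+1})\le\liminf_k F(\bar x^k)=\zeta'$; conversely, each $\bar x^k$ is stationary, hence a fixed point of $T$, so re-deriving \eqref{rel1} with base point $\bar x^k$ (for which $T(\bar x^k)=\bar x^k$) and $z=x^k$ yields $F(\bar x^k)\le F(x^k)+\frac{l}{2}\mathrm{dist}(x^k,\mathcal{X})^2$, whence $\zeta'=\limsup_k F(\bar x^k)\le\zeta$. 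Thus $\zeta'=\zeta$. I expect this identification $F(\bar x^k)\equiv\zeta$ to be the main obstacle: \eqref{rel1} gives only one-sided control, and since $F$ is merely lower semicontinuous one cannot pass $F$ through the limit directly; the resolution rests on the two structural facts that stationary points are fixed points of $T$ (turning \eqref{rel1} into the reverse bound above) and that Assumption~\ref{errorbound}(ii) makes $\{F(\bar x^k)\}$ eventually constant, so the two one-sided limits match. The remaining Lipschitz/nonexpansiveness estimates and the assembly of $C_1$ are then routine.
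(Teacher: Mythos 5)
Your proof is correct, and its skeleton coincides with the paper's own: part (i) is the same nonexpansiveness-plus-Lipschitz residual bound (your estimate $\|r(x^k)\|\le 2\bar\beta\|x^k-x^{k-1}\|+\|x^{k+1}-x^k\|$ is in fact slightly tighter than the paper's $4\bar\beta\|x^k-x^{k-1}\|+\|x^{k+1}-x^k\|$; both suffice), and part (ii) assembles the same four ingredients: \eqref{rel1} with $z=\bar x^k$, Assumption~\ref{errorbound}(ii) to make $F(\bar x^k)$ eventually constant, the sufficient decrease \eqref{Hdiffer}, and the contraction $a_{k+1}\le\frac{C_1}{C_1+c_3}a_k$. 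The one place you genuinely deviate is the step you rightly flag as the crux, pinning the eventual value of $F(\bar x^k)$ to $\zeta$. The paper proves only a one-sided bound: using $-\nabla f(\bar x^k)\in\partial g(\bar x^k)$ in the subgradient inequality for $g$ together with the descent lemma applied to $-f$, it obtains $F(\bar x^k)\le F(x^k)+\frac{L}{2}\mathrm{dist}^2(x^k,\mathcal{X})$, hence $\zeta\le\inf_k H_{k,\alpha}$ (relation \eqref{infH}); that $\zeta$ actually equals $\lim_k H_{k,\alpha}$ is never asserted beforehand but falls out of the contraction at the end. You instead identify the limits up front: the fixed-point property $T(\bar x^k)=\bar x^k$ of stationary points lets you re-run the derivation of \eqref{rel1} from the base point $\bar x^k$ with $z=x^k$, giving $F(\bar x^k)\le F(x^k)+\frac{l}{2}\mathrm{dist}^2(x^k,\mathcal{X})$, and combined with the $\liminf$ direction this yields $\zeta'=\zeta=\lim_k F(x^k)$, the latter limit being supplied by Proposition~\ref{Hconstant}. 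Both mechanisms are sound --- indeed your re-derivation at the fixed point reproduces the paper's subgradient inequality as the prox-optimality condition, with the $f_1$--$f_2$ splitting replacing the descent lemma for $-f$ --- and your variant buys a small simplification: defining $\zeta$ as $\lim_k H_{k,\alpha}$ makes nonnegativity of $a_k=H_{k,\alpha}-\zeta$ automatic from monotonicity, so no separate analogue of \eqref{infH} is needed.
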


\begin{proof}
First we prove (i). Observe that
\begin{equation}\label{qaddmin}
\begin{split}
&\left\|\mathrm{Prox}_{\frac{1}{L}g}\left(x^{k} - \frac1L \nabla f(x^k)\right)-x^{k}\right\|\\
&\leq \left\|\mathrm{Prox}_{\frac{1}{L}g}\left(x^{k} - \frac1L \nabla f(x^k)\right) - \mathrm{Prox}_{\frac{1}{L}g}\left(y^{k} - \frac1L \nabla f(y^k)\right)\right\|\\
&\ \ +\left\|\mathrm{Prox}_{\frac{1}{L}g}\left(y^{k} - \frac1L \nabla f(y^k)\right)-y^{k}\right\|+\|y^{k}-x^{k}\|.
\end{split}
\end{equation}
We now derive an upper bound for the first term on the right hand side of \eqref{qaddmin}. To this end, using the nonexpansiveness property of the proximal operator (see, for example, \cite[Page~340]{Rockafellar1970}), we have
\begin{equation}\label{project}
\begin{split}
& \left\|\mathrm{Prox}_{\frac{1}{L}g}\left(x^{k}-\frac{1}{L}\nabla f(x^{k})\right)-\mathrm{Prox}_{\frac{1}{L}g}\left(y^{k}-\frac{1}{L}\nabla f(y^{k})\right)\right\|\\
&\leq \left\|x^{k}-\frac{1}{L}\nabla f(x^{k})-y^{k}+\frac{1}{L}\nabla f(y^{k})\right\|\\
& \le \|x^k - y^k\| + \frac{1}{L}\|\nabla f(x^k) - \nabla f(y^k)\| \le 2\|x^k - y^k\|,
\end{split}
\end{equation}
where the last inequality follows from the fact that $\nabla f$ is Lipschitz continuous with modulus $L$.
Combining \eqref{qaddmin}, \eqref{project} and invoking the definition of $x^{k+1}$ in Algorithm 1, we see further that
\begin{equation}\label{contrac}
\begin{split}
& \left\|\mathrm{Prox}_{\frac{1}{L}g}\left(x^{k} - \frac1L \nabla f(x^k)\right)-x^{k}\right\| \leq 3\|x^{k}-y^{k}\|+\|x^{k+1}-y^{k}\|\\
& \le 4\|x^k - y^k\| + \|x^{k+1} - x^k\|\le 4\bar\beta \|x^k - x^{k-1}\| + \|x^{k+1} - x^k\|,
\end{split}
\end{equation}
where the last inequality follows from the definition of $y^k$ in \eqref{iteratePG} and the definition of $\bar\beta$.
Since $\|x^{k+1}-x^{k}\| \to 0$ by Lemma \ref{sucgo01}, we conclude from \eqref{contrac} that
\begin{equation}\label{rel2}
  \left\|\mathrm{Prox}_{\frac{1}{L}g}\left(x^{k} - \frac1L \nabla f(x^k)\right)-x^{k}\right\|\to 0.
\end{equation}
Let $\xi=H_{0,\alpha}$. Since $\{H_{k,\alpha}\}$ is nonincreasing by Lemma~\ref{sucgo0}, we must have $H_{k,\alpha}\leq \xi$ for all $k$, and consequently $F(x^{k})\leq \xi$ for all $k$. In view of this, \eqref{rel2} and Assumption \ref{errorbound} (i), we see that for $\xi = H_{0,\alpha}$, there exist $\tau > 0$ and a positive integer $K$ so that for all $k\ge K$, we have
\begin{equation}\label{dist}
\mathrm{dist}(x^{k},\mathcal{X}) \leq \tau \left\|\mathrm{Prox}_{\frac{1}{L}g}\left(x^{k} - \frac1L \nabla f(x^k)\right)-x^{k}\right\|.
\end{equation}
Thus from \eqref{rel2} and \eqref{dist}, we immediately obtain the conclusion in (i).

We now prove (ii). Take an arbitrary $z\in {\cal X}$, we have from \eqref{rel1} that
\begin{equation}\label{unity}
\begin{split}
F(x^{k+1}) &\leq F(z)+\frac{L+l}{2}\|z-y^{k}\|^{2}-\frac{L}{2}\|x^{k+1}-z\|^{2}\\
& \le F(z)+\frac{L+l}{2}\|z-y^{k}\|^{2}\\
& = F(z) + \frac{L+l}{2}\|z-x^{k}+x^{k}-y^{k}\|^{2}\\
& \le F(z) + (L+l)\|z-x^{k}\|^{2}+(L+l)\|x^{k}-y^{k}\|^{2}.
\end{split}
\end{equation}
Choose $z$ in \eqref{unity} to be an $\bar x^k\in{\cal X}$ so that $\|\bar x^k-x^{k}\| = {\rm dist}(x^k,{\cal X})$. Then we obtain
\begin{equation}\label{Fdistx-1}
F(x^{k+1})-F(\bar x^k)\leq (L+l)\mathrm{dist}^{2}(x^{k},{\cal X})+(L+l)\|x^{k}-y^{k}\|^{2}.
\end{equation}
In addition, recall that $\|x^{k+1} - x^k\|\to 0$ by Lemma \ref{sucgo01}. This together with \eqref{rel2} and \eqref{dist} shows that $\|\bar x^{k+1} - \bar x^k\|\to 0$. In view of this and Assumption~\ref{errorbound} (ii), it must then hold true that $F(\bar x^k)\equiv \zeta$ for some constant $\zeta$ for all sufficiently large $k$. Thus, for all sufficiently large $k$, we have from \eqref{Fdistx-1} that
\begin{equation}\label{Fdistx}
F(x^{k+1})-\zeta\leq (L+l)\mathrm{dist}^{2}(x^{k},{\cal X})+(L+l)\|x^{k}-y^{k}\|^{2}.
\end{equation}
On the other hand, since $\bar x^k$ is a stationary point of \eqref{compro} so that $- \nabla f(\bar x^k) \in \partial g(\bar x^k)$, we have for all $k$ that,
\[
g(\bar x^k) - g(x^k) \le \langle -\nabla f(\bar x^k),\bar x^k - x^k\rangle.
\]
Using this and the definitions of $F$, $H_{k,\alpha}$ and $\zeta$, we see that for all sufficiently large $k$,
\begin{equation*}
\begin{split}
  &\zeta - H_{k,\alpha} = F(\bar x^k) - F(x^k) - \alpha\|x^k - x^{k-1}\|^2 \\
  & = f(\bar x^k) + g(\bar x^k) - f(x^k) - g(x^k) - \alpha\|x^k - x^{k-1}\|^2\\
  & \le f(\bar x^k) - f(x^k) + \langle -\nabla f(\bar x^k),\bar x^k - x^k\rangle - \alpha\|x^k - x^{k-1}\|^2\\
  & = - f(x^k) - [-f(\bar x^k)] - \langle -\nabla f(\bar x^k),x^k - \bar x^k\rangle - \alpha\|x^k - x^{k-1}\|^2\\
  & \le \frac{L}{2}\|x^k - \bar x^k\|^2 - \alpha\|x^k - x^{k-1}\|^2,
\end{split}
\end{equation*}
where the last inequality follows from the Lipschitz continuity of $-\nabla f$. Using this, the fact that $\|x^{k+1} - x^k\|\to 0$ by Lemma~\ref{sucgo01} and the fact that $\|\bar x^k-x^{k}\| = {\rm dist}(x^k,{\cal X}) \rightarrow 0$ by (i), we deduce that
\begin{equation}\label{infH}
\zeta\le \lim_{k\rightarrow \infty}H_{k,\alpha} = \inf_{k}H_{k,\alpha},
\end{equation}
where the equality follows from Lemma \ref{sucgo0} (iii).

Now, from \eqref{contrac}, \eqref{dist} and \eqref{Fdistx}, we see that for all sufficiently large $k$,
\begin{equation*}
\begin{split}
F(x^{k+1})-\zeta&\leq (L+l)\mathrm{dist}^{2}(x^{k},{\cal X})+(L+l)\|x^{k}-y^{k}\|^{2}\\
&\le (L+l)\tau^{2}(4\bar\beta\|x^k - x^{k-1}\| + \|x^{k+1}-x^k\|)^2+(L+l)\|x^{k}-y^{k}\|^{2}\\
&\le (L+l)\tau^2(4\bar\beta\|x^k - x^{k-1}\| + \|x^{k+1}-x^k\|)^2 + (L+l)\bar\beta^2\|x^{k}-x^{k-1}\|^2\\
&\le C(\|x^k - x^{k-1}\|^2 + \|x^{k+1}-x^k\|^2),
\end{split}
\end{equation*}
for some positive constant $C$, where the third inequality follows from the definition of $y^k$ in \eqref{iteratePG} and the definition of $\bar\beta$.
Combining this with the definition of $H_{k,\alpha}$, we obtain further that
\begin{equation}\label{Hbound2}
0\le H_{k+1,\alpha}-\zeta \leq \eta(\|x^{k}-x^{k-1}\|^{2}+\|x^{k+1}-x^{k}\|^{2}),
\end{equation}
where $\eta = C+\alpha$, and the nonnegativity is a consequence of \eqref{infH}.
On the other hand, let $\delta=\min\left\{\frac{L}{2}-\alpha, \alpha-\frac{L+l}{2}\bar\beta^{2}\right\}$. Then $\delta > 0$ and we see from \eqref{Hdiffer} that
\begin{equation}\label{Hdiffer2}
(H_{k+1,\alpha}-\zeta)-(H_{k,\alpha}-\zeta)\leq -\delta(\|x^{k+1}-x^{k}\|^{2}+\|x^{k}-x^{k-1}\|^{2}).
\end{equation}
Combining \eqref{Hdiffer2} and \reff{Hbound2}, we obtain further that
\begin{equation}\label{Hdiffer3}
(H_{k+1,\alpha}-\zeta)-(H_{k,\alpha}-\zeta)\leq -\frac{\delta}{\eta}(H_{k+1,\alpha}-\zeta).
\end{equation}
Reorganizing \reff{Hdiffer3}, we see that for all sufficiently large $k$,
\begin{equation*}
0\le H_{k+1,\alpha}-\zeta \leq \frac{1}{1+\frac{\delta}{\eta}}(H_{k,\alpha}-\zeta),
\end{equation*}
which implies that the sequence $\{H_{k,\alpha}\}$ is $Q$-linearly convergent. This completes the proof.
\end{proof}

We are now ready to prove the local linear convergence of the sequences $\{x^k\}$ and $\{F(x^k)\}$, using the $Q$-linear convergence of $\{H_{k,\alpha}\}$.
\begin{theorem}\label{Linearcon}Suppose that $\bar\beta<\sqrt{\frac{L}{L+l}}$ and that Assumption \ref{errorbound} holds.
Let $\{x^{k}\}$ be a sequence generated by Algorithm 1. Then the following statements hold.
\begin{enumerate}[{\rm (i)}]
\item  The sequence $\{x^{k}\}$ is $R$-linearly convergent to a stationary point of $F$.
\item  The sequence $\{F(x^{k})\}$ is R-linearly convergent.
\end{enumerate}
\end{theorem}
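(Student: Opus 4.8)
The plan is to bootstrap the $Q$-linear convergence of the Lyapunov sequence $\{H_{k,\alpha}\}$ from Lemma~\ref{HQ1}(ii) into $R$-linear convergence of the iterates. Throughout I would fix $\alpha\in(\frac{L+l}{2}\bar\beta^2,\frac{L}{2})$ (nonempty since $\bar\beta<\sqrt{L/(L+l)}$) and set $\zeta=\lim_k H_{k,\alpha}$. Because $\|x^k-x^{k-1}\|\to0$ by Lemma~\ref{sucgo01}(i), this $\zeta$ coincides with $\lim_k F(x^k)$ from Proposition~\ref{Hconstant}, and by \eqref{infH} we have $H_{k,\alpha}-\zeta\ge0$ for all $k$. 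The $Q$-linear convergence supplies a $c\in(0,1)$ with $H_{k+1,\alpha}-\zeta\le c\,(H_{k,\alpha}-\zeta)$ for all large $k$, so the gap $H_{k,\alpha}-\zeta$ decays geometrically.

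First I would control the successive differences. Using the descent estimate \eqref{Hdiffer} with $\delta:=\min\{\frac L2-\alpha,\ \alpha-\frac{L+l}2\bar\beta^2\}>0$ (both coefficients in \eqref{Hdiffer} are then at most $-\delta$) and discarding the nonnegative term $\delta\|x^k-x^{k-1}\|^2$, we obtain for all large $k$
\begin{equation*}
\delta\|x^{k+1}-x^k\|^2\le H_{k,\alpha}-H_{k+1,\alpha}\le H_{k,\alpha}-\zeta.
\end{equation*}
Since $\{\tfrac1\delta(H_{k,\alpha}-\zeta)\}$ is $Q$-linearly convergent to $0$, Lemma~\ref{prelim} applied to $0\le\|x^{k+1}-x^k\|^2\le\tfrac1\delta(H_{k,\alpha}-\zeta)$ shows that $\{\|x^{k+1}-x^k\|^2\}$ is $R$-linearly convergent to $0$; taking square roots preserves $R$-linear convergence, so $\{\|x^{k+1}-x^k\|\}$ converges $R$-linearly to $0$. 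In particular there exist $a>0$ and $\rho\in(0,1)$ with $\|x^{k+1}-x^k\|\le a\rho^k$ for all large $k$.

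Next I would run a telescoping argument. For $m>k$ both large, $\|x^m-x^k\|\le\sum_{j=k}^{m-1}\|x^{j+1}-x^j\|\le a\rho^k/(1-\rho)$, so $\{x^k\}$ is Cauchy and converges to some $x^*$; letting $m\to\infty$ gives $\|x^*-x^k\|\le a\rho^k/(1-\rho)$, i.e. $\{x^k\}$ converges $R$-linearly to $x^*$. As the limit of $\{x^k\}$, the point $x^*$ is an accumulation point, hence a stationary point of $F$ by Lemma~\ref{sucgo01}(ii); this proves (i). For (ii) I would write $F(x^k)-\zeta=(H_{k,\alpha}-\zeta)-\alpha\|x^k-x^{k-1}\|^2$, whence $|F(x^k)-\zeta|\le(H_{k,\alpha}-\zeta)+\alpha\|x^k-x^{k-1}\|^2$; the first term on the right is $Q$-linearly (hence $R$-linearly) convergent to $0$ and the second is $R$-linearly convergent to $0$ by the previous step, and a sum of two nonnegative $R$-linearly convergent null sequences is again $R$-linearly convergent, so $\{F(x^k)\}$ converges $R$-linearly to $\zeta$.

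I expect the main obstacle to be the passage from the Lyapunov function to the iterates: the key is to exploit simultaneously that the \emph{per-step decrease} of $H_{k,\alpha}$ dominates $\|x^{k+1}-x^k\|^2$ through \eqref{Hdiffer}, while the \emph{gap} $H_{k,\alpha}-\zeta$ itself decays geometrically by Lemma~\ref{HQ1}. Converting the resulting $R$-linear bound on the steps into convergence of the whole sequence is precisely the telescoping step, and it is this summation that upgrades the qualitative statement ``every accumulation point is stationary'' (Lemma~\ref{sucgo01}(ii)) into genuine $R$-linear convergence of $\{x^k\}$ to a single stationary point $x^*$.
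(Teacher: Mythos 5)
Your proposal is correct and follows essentially the same route as the paper's proof: fix $\alpha\in(\frac{L+l}{2}\bar\beta^2,\frac{L}{2})$, use the $Q$-linear convergence of $\{H_{k,\alpha}\}$ from Lemma~\ref{HQ1} together with \eqref{Hdiffer} to get a geometric bound on $\|x^{k+1}-x^k\|$, telescope to obtain $R$-linear convergence of $\{x^k\}$ to a stationary point via Lemma~\ref{sucgo01}, and bound $|F(x^k)-\zeta|$ by $(H_{k,\alpha}-\zeta)+\alpha\|x^k-x^{k-1}\|^2$ for part (ii). The only cosmetic differences are that you invoke Lemma~\ref{prelim} and the closure of $R$-linear null sequences under sums where the paper instead folds $\alpha\|x^k-x^{k-1}\|^2$ back into $\frac{\alpha}{\delta}(H_{k-1,\alpha}-\zeta)$ before applying Lemma~\ref{prelim}; both are equivalent.
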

\begin{proof}
Fix any $\alpha \in (\frac{L+l}{2}\bar\beta^{2},\frac{L}{2})$, which exists because $\bar\beta<\sqrt{\frac{L}{L+l}}$.
Then, in view of Lemma~\ref{HQ1}, the sequence $\{H_{k,\alpha}\}$ is $Q$-linearly convergent. For notational simplicity, we denote its limit by $\zeta$. Let $\delta=\min\left\{\frac{L}{2}-\alpha, \alpha-\frac{L+l}{2}\bar\beta^{2}\right\}$. Then $\delta > 0$ and we obtain from \eqref{Hdiffer} that
\begin{equation}\label{eq:newlyadded}
\|x^{k+1}-x^{k}\|^{2} \leq \frac{1}{\delta}(H_{k,\alpha}-\zeta)-\frac{1}{\delta}(H_{k+1,\alpha}-\zeta)\le \frac{1}{\delta}(H_{k,\alpha}-\zeta),
\end{equation}
where the last inequality follows from the fact that the sequence $\{H_{k,\alpha}\}$ is nonincreasing and convergent to $\zeta$, thanks to Lemmas~\ref{sucgo0} and \ref{ssucgo0}. Using the above inequality and the fact that the sequence $\{H_{k,\alpha}\}$ is $Q$-linearly convergent, we see that there exist $0<c<1$ and $M > 0$ such that
\begin{equation}\label{rel3}
\|x^{k+1}-x^k\|\le M c^k
\end{equation}
for all $k$. Consequently, for any $m_2> m_1\ge 1$, we have
\[
\|x^{m_2}- x^{m_1}\| \le \sum_{k=m_1}^{m_2-1}\|x^{k+1}-x^k\|\le \frac{Mc^{m_1}}{1-c},
\]
showing that $\{x^k\}$ is a Cauchy sequence and hence convergent. Denoting its limit by $\hat x$ and passing to the limit as $m_2\to \infty$ in the above relation, we see further that
\[
\|x^{m_1} - \hat x\|\le \frac{Mc^{m_1}}{1-c}.
\]
This means that the sequence $\{x^k\}$ is $R$-linearly convergent to its limit, which is a stationary point of $F$ according to Lemma~\ref{sucgo01}. This proves (i).

Next, we prove (ii). Notice that for any $k\ge 1$, we have from the definition of $H_{k,\alpha}$ that
\begin{equation*}
\begin{split}
  |F(x^k) - \zeta| & = |H_{k,\alpha} - \zeta - \alpha\|x^k - x^{k-1}\|^2| \le H_{k,\alpha} - \zeta + \alpha \|x^k - x^{k-1}\|^2\\
  & \le H_{k,\alpha} - \zeta + \frac{\alpha}{\delta}(H_{k-1,\alpha} - \zeta),
\end{split}
\end{equation*}
where the first inequality follows from the triangle inequality and the fact that the sequence $\{H_{k,\alpha}\}$ is nonincreasing and convergent to $\zeta$ according to Lemmas~\ref{sucgo0} and \ref{ssucgo0},
and the second inequality follows from \eqref{eq:newlyadded}. This together with the $Q$-linear convergence of $\{H_{k,\alpha}\}$ and Lemma~\ref{prelim} implies the $R$-linear convergence of $\{F(x^k)\}$.
\end{proof}

\subsection{FISTA with restart: a special case of Algorithm 1}\label{sec3.3}
In this subsection, we discuss FISTA with restart. Restart schemes for FISTA were proposed recently in O'Donoghue and Cand\`{e}s \cite{Donoghue2013}, 
where they adopted as a heuristic an adaptive restart technique, and established global linear convergence of the objective value when applying their method to \eqref{compro} with $f$ being strongly convex and $g=0$. The restart techniques have also been adopted in the popular software, TFOCS \cite{BCG11}. While they did not prove any linear convergence results for convex nonsmooth problems such as the LASSO, they stated that for the LASSO, ``after a certain number of iterations adaptive restarting can provide linear convergence"; see \cite[Page~728]{Donoghue2013}. In this subsection, we will explain that FISTA equipped with the aforementioned restart schemes is a special case of Algorithm 1. Moreover, when both of their restart schemes are used for the LASSO, both the sequences $\{x^k\}$ and $\{F(x^k)\}$ are $R$-linearly convergent.

To proceed, we first present FISTA \cite{Beck2009,Nesterov2007} for solving \eqref{compro} with $f$ being in addition convex.
\begin{center}
\fbox{\parbox{5in}{\vspace{1mm}
\begin{description}
\item\textbf{FISTA} ~{\textbf{Input}:} $x^{0}\in \dom g$, $\theta_{-1}=\theta_0=1$. Set $x^{-1}=x^{0}$.
\item \hspace{5mm} ~\textbf{for} $k=0,1,2\cdots$ ~\textbf{do}\vspace{-1mm}
\begin{equation*}
\begin{split}
&\beta_{k}=\frac{\theta_{k-1}-1}{\theta_{k}},\\
&y^{k}=x^{k}+\beta_{k}(x^{k}-x^{k-1}),\\
&x^{k+1}=\mathrm{Prox}_{\frac{1}{L}g}\left(y^{k}-\frac{1}{L}\nabla f(y^{k})\right),\\
&\theta_{k+1}= \frac{1+\sqrt{1+4\theta_{k}^{2}}}{2}.
\end{split}
\end{equation*}
\item \vspace{0mm}\hspace{5mm} \textbf{end for}
\end{description}}}
\end{center}

As one of the many variants of Nesterov's accelerated proximal gradient algorithms, FISTA uses a specific choice of $\{\beta_{k}\}$. According to the formula for updating $\beta_{k}$ in FISTA above, it holds that $0\le\beta_{k}<1$ for all $k$.\footnote{Since $\theta_{k+1} = \left(1+\sqrt{1+4\theta_{k}^{2}}\right)/2$ and $\theta_{-1}=\theta_0=1$ in FISTA, by induction, it is routine to show that $\theta_k \ge \frac32$ and $\theta_{k-1}-1 <\theta_k$ whenever $k\ge 1$. Combining these with the definition of $\beta_k$ in FISTA, we see that $0\le\beta_{k}<1$ for all $k$.} On the other hand, since $f$ is convex, we can choose $l = 0$ and thus $\sqrt{\frac{L}{L+l}} = 1$ in Algorithm 1. Consequently, FISTA can be viewed as a special case of Algorithm 1.

FISTA with restart (see, for example, \cite{BCG11,Donoghue2013}) is based on FISTA. Here, we adopt the same restart schemes as in \cite{Donoghue2013}: fixed restart and adaptive restart. In the fixed restart scheme, we choose a positive integer $K$ and reset $\theta_{k-1} = \theta_k=1$ every $K$ iterations, while in the adaptive restart (gradient scheme),\footnote{There is also another scheme based on function values. It was discussed in \cite[Section~3.2]{Donoghue2013} that the two schemes perform similarly empirically and that the gradient scheme has advantages over the function value scheme. Thus, in this paper, we focus on the gradient scheme.} we reset $\theta_k = \theta_{k+1} = 1$ whenever $\langle y^{k}-x^{k+1},x^{k+1}-x^{k}\rangle >0$; see \cite[Eq.~13]{Donoghue2013}. Clearly, whenever the fixed restart scheme is invoked, we will have $\bar\beta<1$. Thus, we have the following immediate corollary of Theorem~\ref{Linearcon}.

\begin{corollary}\label{propFISTA} Suppose that $f$ in \eqref{compro} is convex and Assumption~\ref{errorbound} holds. Let $\{x^k\}$ be a sequence generated by FISTA with the fixed restart scheme or both the fixed and adaptive restart schemes. Then
  \begin{enumerate}[{\rm (i)}]
    \item $\{x^k\}$ converges $R$-linearly to a globally optimal solution of \eqref{compro}.
    \item $\{F(x^k)\}$ converges $R$-linearly to the globally optimal value of \eqref{compro}.
  \end{enumerate}
\end{corollary}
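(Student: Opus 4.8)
The plan is to read this off directly from Theorem~\ref{Linearcon} by checking that FISTA with the stated restart schemes is a genuine instance of Algorithm~1 whose extrapolation parameters fall below the required threshold. First I would record the reduction furnished by convexity: since $f$ is convex, the decomposition $f = f_1 - f_2$ may be taken with $f_1 = f$ and $f_2 = 0$, so that one can set $l = 0$ and the threshold $\sqrt{L/(L+l)}$ collapses to $1$. Comparing update rules, the $y$- and $x$-steps of FISTA are verbatim those of \eqref{iteratePG}, so any version of FISTA whose coefficients satisfy $\{\beta_k\}\subseteq[0,1)$ is literally a realization of Algorithm~1 with this choice of $l$; the restart schemes only alter the rule that generates $\{\beta_k\}$, not the proximal-gradient step itself.

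The crux is to show $\bar\beta := \sup_k \beta_k < 1$ under the restart schemes. I would argue that the fixed restart, which resets $\theta_{k-1} = \theta_k = 1$ every $K$ iterations, forces $\{\theta_k\}$ to restart from $1$ at least once in every block of $K$ consecutive iterations. Consequently, between two resets the values of $\theta_k$ run through at most the first $K$ terms of the pure FISTA recurrence started from $\theta_{-1}=\theta_0=1$, and therefore form a finite collection of finite numbers. Since the footnote in Section~\ref{sec3.3} already records that $\theta_{k-1}-1 < \theta_k$, and hence $\beta_k = (\theta_{k-1}-1)/\theta_k < 1$, for every $k \ge 1$, taking the maximum of $\beta_k$ over this finite collection yields $\bar\beta < 1$. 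When the adaptive restart is switched on in addition, resets can only occur more frequently, never later than the fixed schedule dictates, so each block still has length at most $K$ and the identical finite-value argument gives $\bar\beta < 1$. This is exactly why the statement insists on the fixed restart (alone or together with the adaptive one): the adaptive scheme by itself offers no a priori guarantee that its triggering condition $\langle y^{k}-x^{k+1},x^{k+1}-x^{k}\rangle >0$ is ever met, so $\theta_k$ could grow without bound and drive $\beta_k \to 1$, defeating the threshold requirement.

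With $\bar\beta < 1 = \sqrt{L/(L+l)}$ established, Theorem~\ref{Linearcon} applies and yields at once that $\{x^k\}$ converges $R$-linearly to a stationary point $\hat x$ of $F$ and that $\{F(x^k)\}$ converges $R$-linearly. It then remains only to upgrade ``stationary point'' to ``globally optimal solution.'' Here I would invoke convexity: $g$ is convex by the blanket assumptions and $f$ is convex by hypothesis, so $F = f + g$ is convex, whence the stationarity condition \eqref{stationary} is both necessary and sufficient for global optimality. Thus $\hat x$ minimizes $F$ globally and $\lim_k F(x^k) = F(\hat x) = \inf F$, which gives the $R$-linear convergence of $\{F(x^k)\}$ to the optimal value. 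I expect the uniform bound $\bar\beta < 1$ to be the only genuinely nontrivial point; everything else is an immediate instantiation of the already-proved Theorem~\ref{Linearcon} together with the elementary fact that stationary points of a convex function are global minimizers.
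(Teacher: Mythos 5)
Your proposal is correct and follows essentially the same route as the paper: take $f_1=f$, $f_2=0$ so that $l=0$ and the threshold $\sqrt{L/(L+l)}$ becomes $1$, observe that the fixed restart (with or without the adaptive scheme) forces $\bar\beta<1$, and then invoke Theorem~\ref{Linearcon} together with convexity of $F$ to upgrade the stationary point to a global minimizer. The only difference is that you spell out in detail the finite-block argument behind $\bar\beta<1$, which the paper dismisses with ``clearly,'' and you correctly identify this uniform bound as the reason the fixed restart cannot be dropped.
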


From the discussion following Assumption~\ref{errorbound}, we see that the objective function in the LASSO satisfies Assumption~\ref{errorbound}. Thus, by Corollary~\ref{propFISTA}, when the fixed or both the fixed and adaptive restart schemes are used for the LASSO, both the sequences $\{x^k\}$ and $\{F(x^k)\}$ are $R$-linearly convergent.

Before ending this subsection, we would like to point out two crucial differences between our Corollary~\ref{propFISTA} and the conclusion in \cite{Donoghue2013}. First, they concluded {\em global} linear convergence of function values for a special case of \eqref{compro} where $f$ is strongly convex and $g = 0$, while we obtain {\em local} linear convergence for \eqref{compro} for both $\{x^k\}$ and $\{F(x^k)\}$ with $f$ being convex. Second, their global linear convergence is only guaranteed if $K$ is chosen sufficiently large; see \cite[Eq.~6]{Donoghue2013}. On the other hand, we do not have any restrictions on the number $K$, the width of the restart interval.

\section{Numerical experiments}\label{sec4}

In this section, we conduct numerical experiments to study Algorithm 1 under different choices of $\{\beta_k\}$. We consider three different types of problems: the $\ell_1$ regularized logistic regression problem, the LASSO, and the problem of minimizing a nonconvex quadratic function over a simplex. The first two problems are convex optimization problems, while the third problem is possibly nonconvex. We consider three different algorithms for each class of problems. For the convex problems, we consider Algorithm 1 with $\beta_k \equiv 0$ (proximal gradient algorithm), $\beta_k$ chosen as in FISTA, and $\beta_k$ chosen as in FISTA with both the fixed and the adaptive restart schemes. On the other hand, for the nonconvex problems, we consider Algorithm 1 with $\beta_k \equiv 0$ (proximal gradient algorithm) and $\beta_k \equiv 0.98\sqrt{\frac{L}{L + l}}$. We also consider  FISTA as a heuristic.

All the numerical experiments are performed in Matlab 2014b on a 64-bit PC with an Intel(R) Core(TM) i7-4790 CPU (3.60GHz) and 32GB of RAM.

\subsection{$\ell_1$ regularized logistic regression}
In this subsection, we consider the $\ell_1$ regularized logistic regression problem:
\begin{equation}\label{logistic}
v_{_{\rm log}}:= \mathop{\min}_{\tilde{x}\in {{\mathbb{R}}^{n}},x_0\in \R} \sum_{i=1}^m \log(1+\exp(-b_i(a^\top_i\tilde{x}+x_0)))+\lambda \|\tilde{x}\|_1,
\end{equation}
where $a_i\in \R^n$, $b_i \in \{-1, 1\}$, $i=1,2,\cdots, m$, with $b_i$ not all the same, $m<n$ and $\lambda >0$ is the regularization parameter. It is easy to see that \eqref{logistic} is in the form of \eqref{compro} with
\begin{equation}\label{logisticfg}
f(x)=\sum_{i=1}^m \log(1+\exp(-b_i(Dx)_i)), ~~~~g(x)=\lambda\|\tilde{x}\|_1,
\end{equation}
where $x:= (\tilde x,x_0)\in \R^{n+1}$, and $D$ is the matrix whose $i$th row is given by $(a^\top_i ~~1)$. Moreover, one can show that $\nabla f$ is Lipschitz continuous with modulus $0.25\lambda_{\max}(D^\top D)$. Thus, in our algorithms below, we take $L = 0.25\lambda_{\max}(D^\top D)$ and $l = 0$.

Before applying Algorithm 1, we need to show that $v_{_{\rm log}} > -\infty$ and the solution set ${\cal X}$ of \eqref{logistic} is nonempty. To this end, we first recall that
%
the dual problem of \eqref{logistic} is given by
\begin{equation}\label{dual_log}
\begin{array}{rl}
\max\limits_{u \in \R^m}&d_{_{\rm log}}(u) := -\sum_{i=1}^m [-b_i u_i\log(-b_i u_i)+(1+b_i u_i)\log(1+b_i u_i)]\\
\mathrm{s.t.}& \|A^\top u\|_\infty \le \lambda, ~~ e^\top u=0,
\end{array}
\end{equation}
where $A$ is the matrix whose $i$th row is $a^\top_i$. It can be shown that the optimal values of \eqref{logistic} and \eqref{dual_log} are the same, and that an optimal solution of \eqref{dual_log} exists; see, for example, \cite[Theorem~3.3.5]{BL06}. In addition, we note that because $\lambda>0$ and $b_i$ are not all the same, the generalized Slater condition is satisfied for \eqref{dual_log}, i.e., there exists $\tilde u$ satisfying $\|A^\top\tilde u\|_\infty < \lambda$, $e^\top \tilde u = 0$ and $-1<b_i\tilde u_i < 0$ for $i=1,\ldots,m$. Hence, by \cite[Corollary 28.2.2]{Rockafellar1970}, an optimal solution of \eqref{logistic} exists. Consequently, $v_{_{\rm log}} > -\infty$ and the solution set ${\cal X}$ of \eqref{logistic} is nonempty.

 Thus, Algorithm 1 is applicable. In addition, from the discussion following Assumption~\ref{errorbound}, Assumption~\ref{errorbound} is satisfied for \eqref{logisticfg}. Hence, one should expect $R$-linear convergence of the sequences $\{x^k\}$ and $\{F(x^k)\}$ generated by FISTA with restart, in view of Corollary~\ref{propFISTA}.

We now perform numerical experiments to study Algorithm 1 under three choices of $\{\beta_k\}$: $\beta_k \equiv 0$ as in the proximal gradient algorithm (PG), $\beta_k$ chosen as in FISTA, and $\beta_k$ chosen as in FISTA with both the fixed and the adaptive restart schemes, where we perform a fixed restart every $500$ iterations (FISTA-R500). We choose $\lambda = 5$ in \eqref{logistic} and initialize all three algorithms at the origin. As for the termination, we make use of the fact that for any $\bar x\in {\cal X}$, $\nabla p(D\bar x)$ is an optimal solution of \eqref{dual_log} (see, for example, \cite[Theorem~31.3]{Rockafellar1970}). Specifically, we define
\begin{equation*}
u^k=\min\left\{1,\frac{\lambda}{\left\|A^\top \nabla p(Dx^k)\right\|_\infty}\right\}\nabla p(Dx^k),
\end{equation*}
and terminate the algorithms once the duality gap and the dual feasibility violation are small, i.e.,
\begin{equation*}
\max\left\{\frac{|f(x^k) + g(x^k) - d_{_{\rm log}}(u^k)|}{\max\{f(x^k) + g(x^k),1\}}, \frac{50|e^Tu^k|}{\max\{\|u^k\|,1\}}\right\} \le 10^{-6}.
\end{equation*}
We also terminate the algorithms when the number of iterations hits $5000$.

We consider random instances for our experiments. For each $(m,n,s) = (300,3000,30)$, $(500,5000,50)$ and $(800,8000,80)$, we generate an $m\times n$ matrix $A$ with i.i.d. standard Gaussian entries. We then choose a support set $T$ of size $s$ uniformly at random, and generate an $s$-sparse vector $\hat x$ supported on $T$ with i.i.d. standard Gaussian entries. The vector $b$ is then generated as $b = {\rm sign}(A\hat x + ce)$, where $c$ is chosen uniformly at random from $[0,1]$.

Our computational results are presented in Figures~\ref{log3000}, \ref{log5000} and \ref{log8000}. In the plot (a) of each figure, we plot $\|x^k-x^*\|$ against the number of iterations, where $x^*$ denotes the approximate solution obtained at termination of the respective algorithm; while in the plot (b) of each figure, we plot $|F(x^k) - F_{\rm min}|$ against the number of iterations, where $F_{\rm min}$ denotes the minimum of the three objective values obtained from the three algorithms. We see that both $\{x^k\}$ and $\{F(x^k)\}$ generated by FISTA with both fixed and adaptive restart schemes are $R$-linearly convergent, which conforms with our theory. Moreover, compared with FISTA and the proximal gradient algorithm, the algorithm with restart performs better.

\begin{figure}[h]
\centering
\caption{$l_1-logistic:~n=3000, m=300, s=30$}\label{log3000}
\subfigure[]{\includegraphics[height=3.8cm]{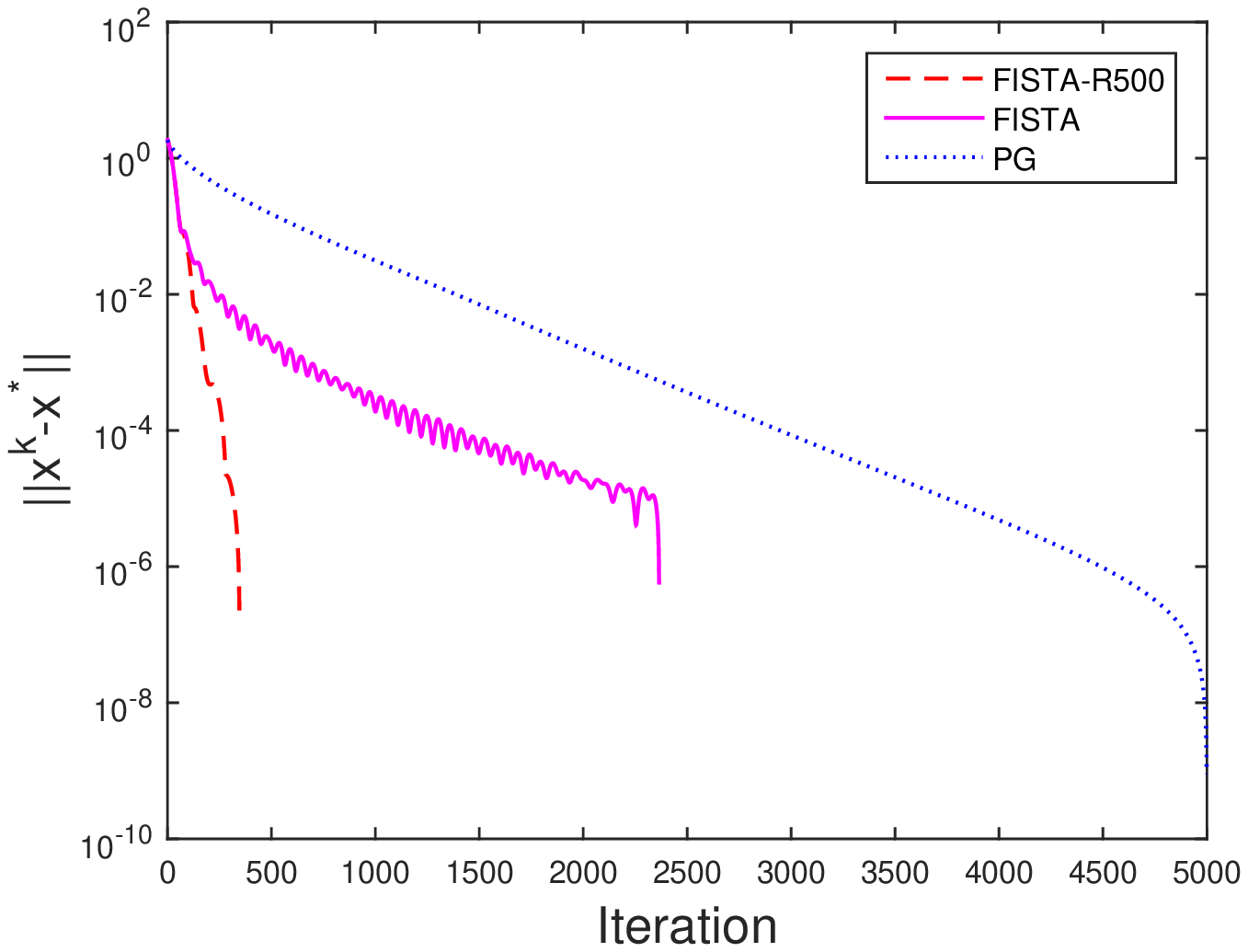}}
\subfigure[]{\includegraphics[height=3.8cm]{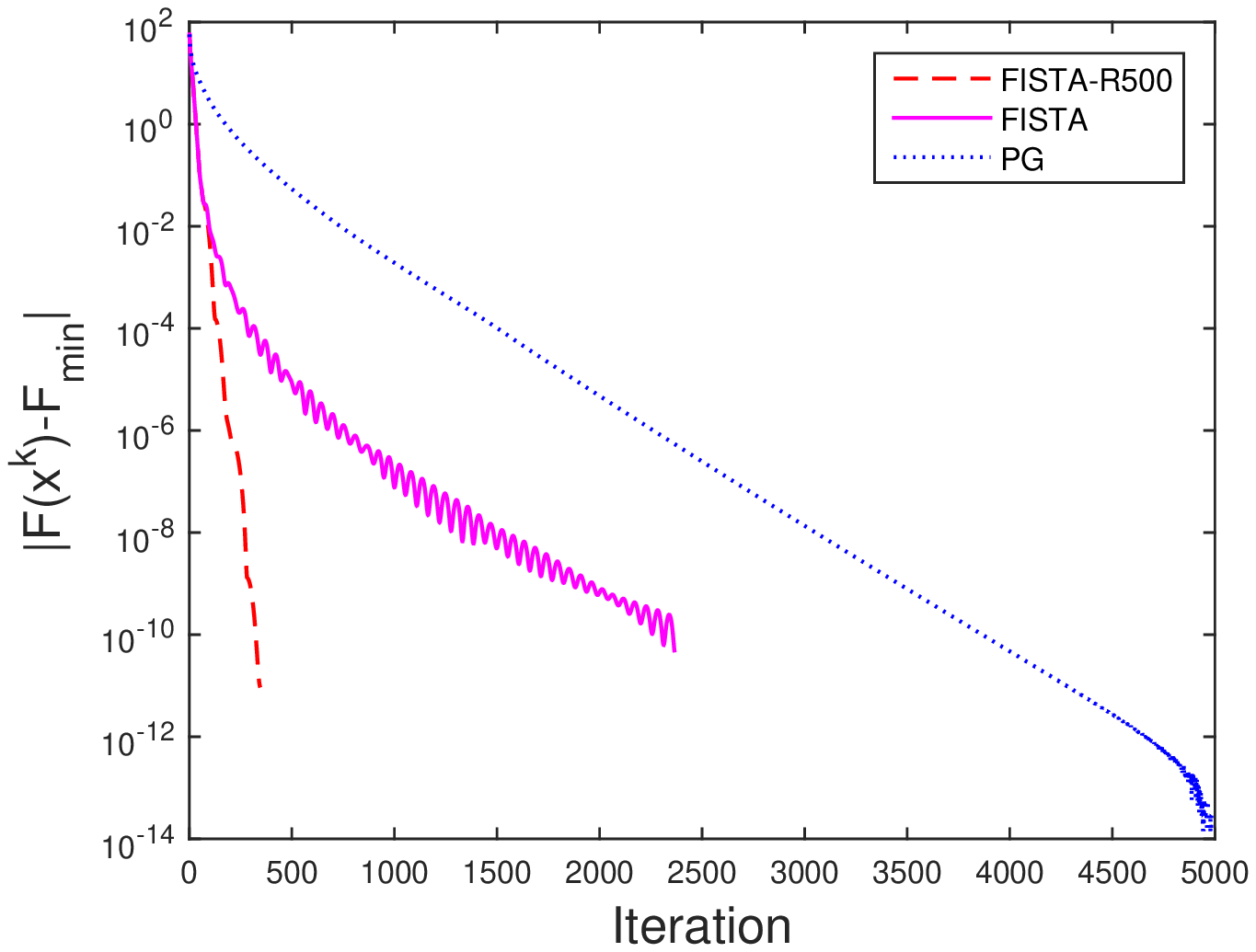}}
\end{figure}

\begin{figure}[h]
\centering
\caption{$l_1-logistic:~n=5000, m=500, s=50$}\label{log5000}
\subfigure[]{\includegraphics[height=3.8cm]{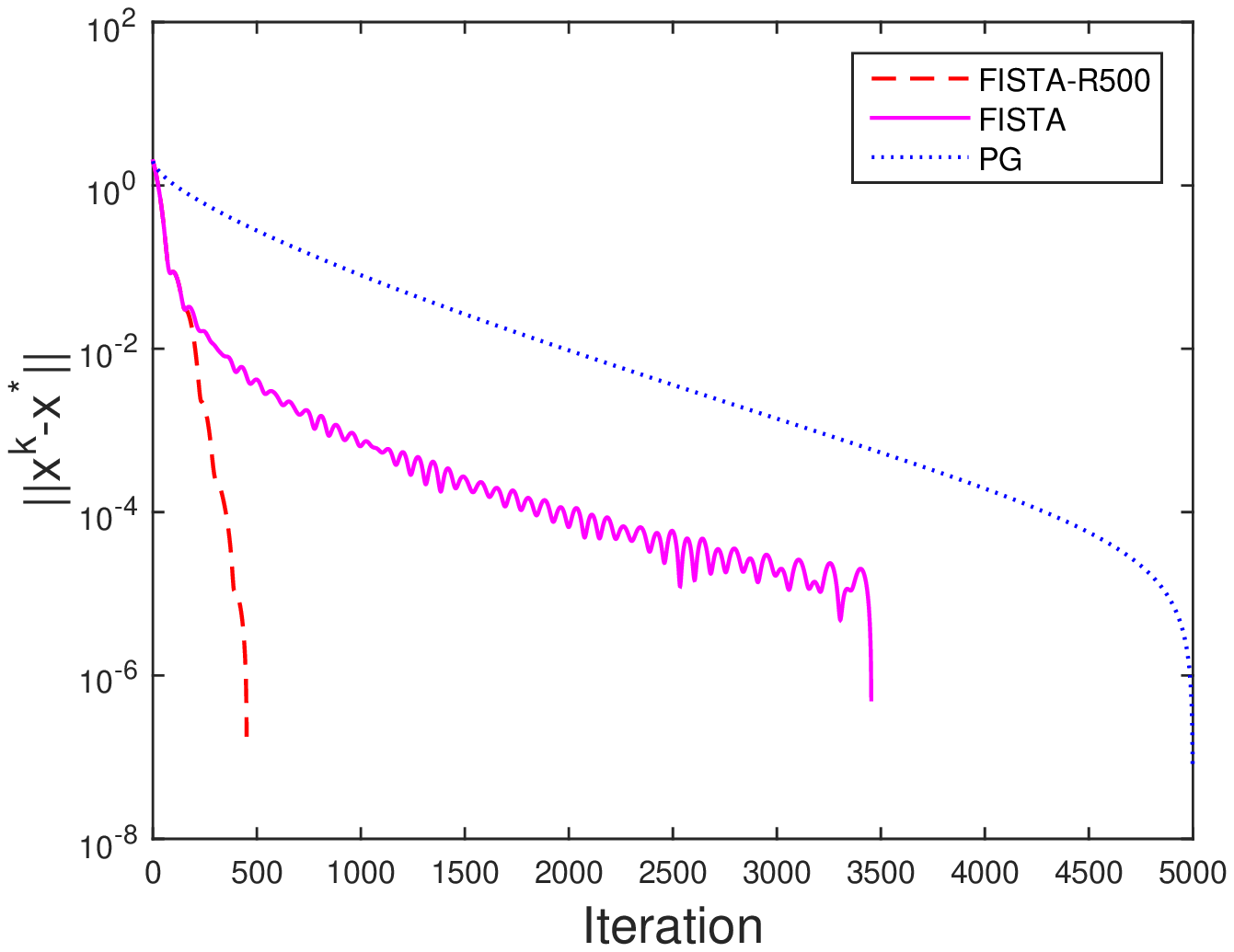}}
\subfigure[]{\includegraphics[height=3.8cm]{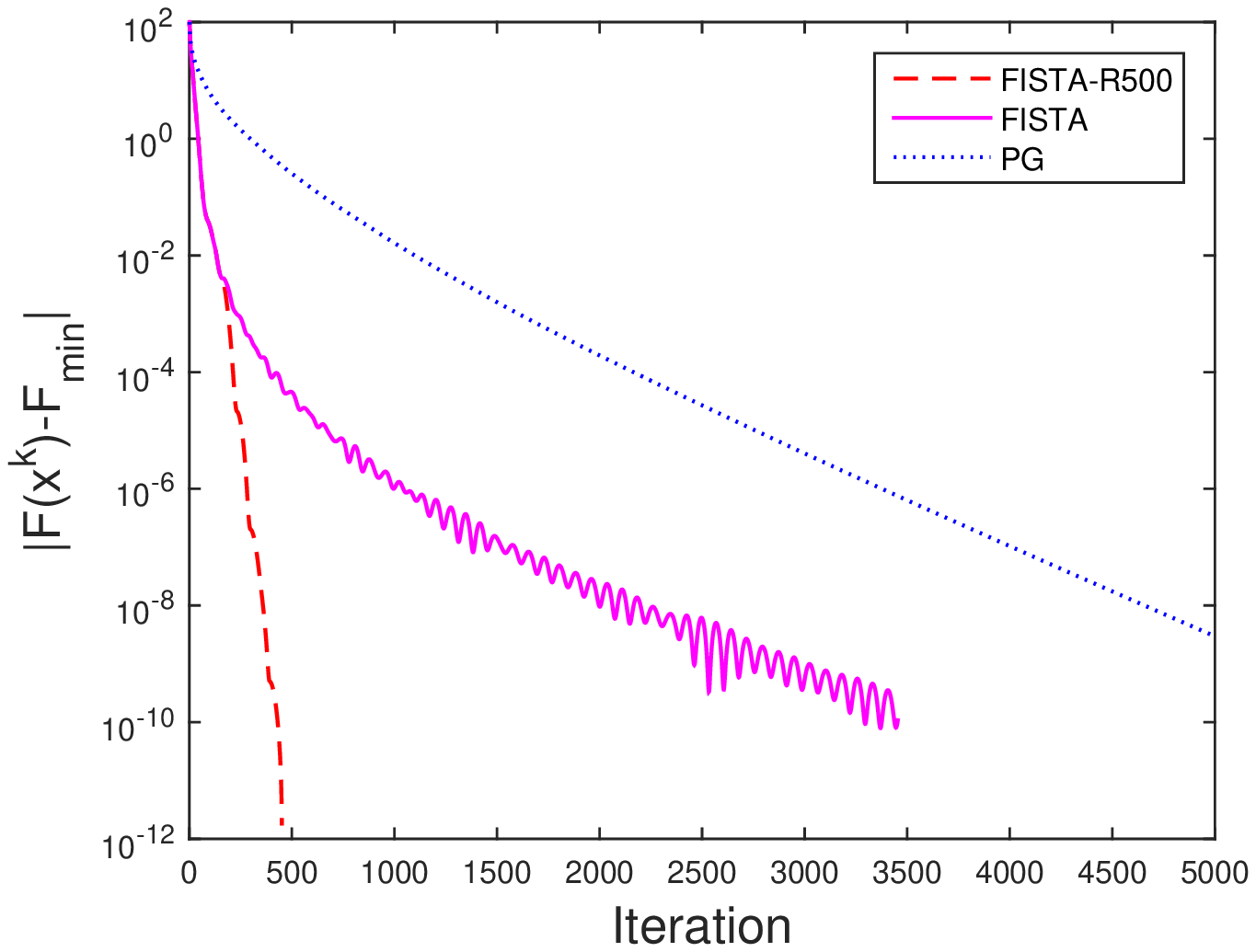}}
\end{figure}

\begin{figure}[h]
\centering
\caption{$l_1-logistic:~n=8000, m=800, s=80$}\label{log8000}
\subfigure[]{\includegraphics[height=3.8cm]{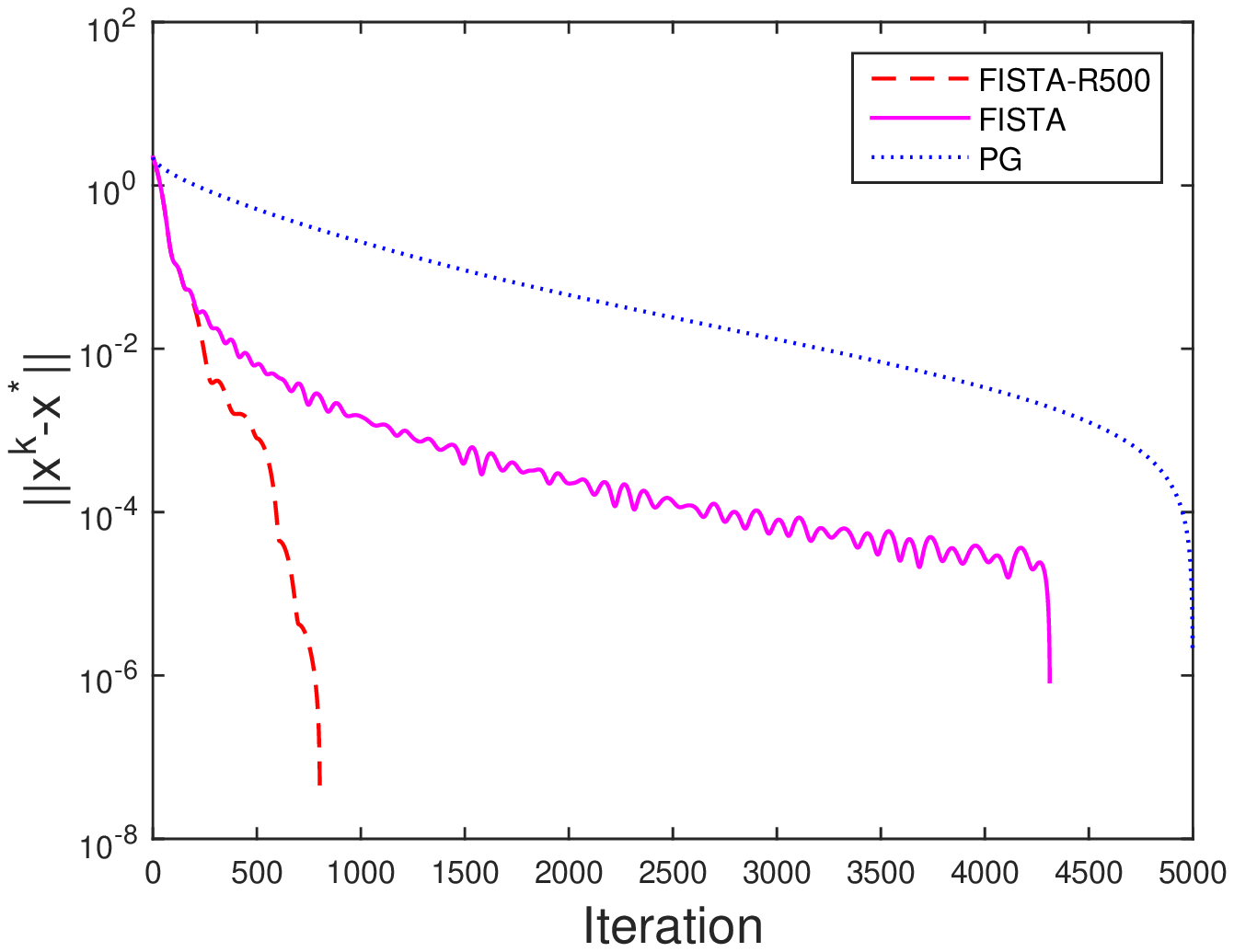}}
\subfigure[]{\includegraphics[height=3.8cm]{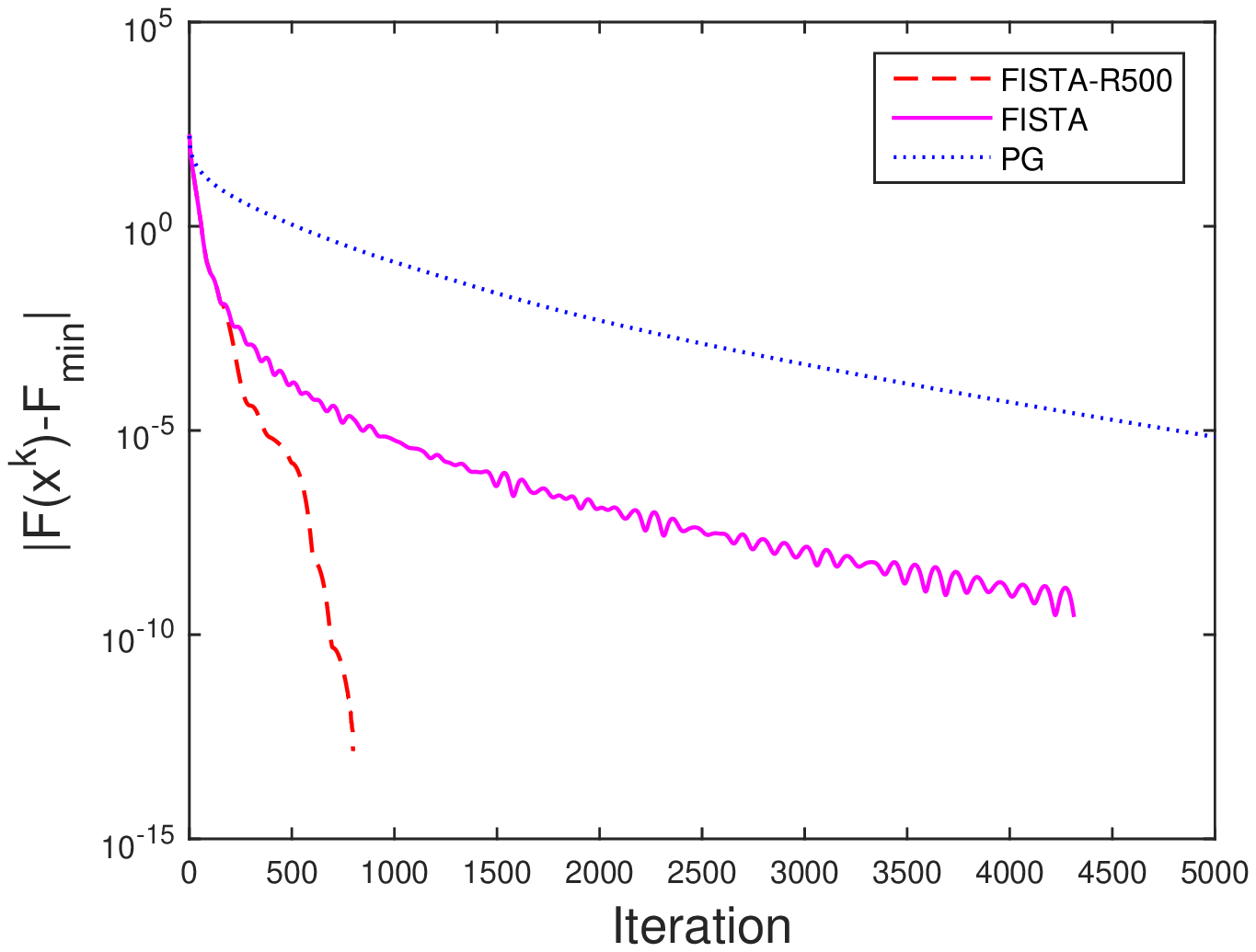}}
\end{figure}

\subsection{LASSO}
In this subsection, we consider the LASSO:
\begin{equation}\label{lasso}
v_{_{\rm ls}}:= \mathop{\min}_{x\in {{\mathbb{R}}^{n}}} \frac{1}{2}\|Ax-b\|^{2}+\lambda \|x\|_1,
\end{equation}
where $A\in {\mathbb{R}}^{m\times n}$ and $b\in {\mathbb{R}}^{m}$. We observe that \eqref{lasso} is in the form of \eqref{compro} with
\begin{equation}\label{lassofg}
f(x)=\frac{1}{2}\|Ax-b\|^{2},~~~~g(x)=\lambda\|x\|_1.
\end{equation}
It is clear that $f$ has a Lipschitz continuous gradient and $f+g$ has compact lower level sets. Thus, we can apply Algorithm 1 to solving \eqref{lasso}. Moreover, in view of the discussion following Assumption~\ref{errorbound}, Assumption~\ref{errorbound} is satisfied for \eqref{lassofg}. Hence, according to Corollary~\ref{propFISTA}, one should observe $R$-linear convergence of both the sequences $\{x^k\}$ and $\{F(x^k)\}$ generated by FISTA with restart. Finally, it is not hard to show that $\nabla f$ has a Lipschitz continuity modulus of $\lambda_{\max}(A^\top A)$. In view of this, in the algorithms below, we take $L=\lambda_{\max}(A^\top A)$ and $l=0$.

Before describing our numerical experiments, we recall that $f(x)= h(Ax)= \frac{1}{2}\|Ax-b\|^{2}$, where $h(v)=\frac{1}{2}\|v-b\|^2$. The conjugate function of $h$ can then be easily computed as $h^*(u) := \sup_{v\in \R^m}\{u^\top v - h(v)\}= \frac{1}{2}\|u\|^2+ b^\top u$. Hence, the dual problem of \eqref{lasso} is given by
\begin{equation}\label{dual_ls}
\begin{array}{rl}
\max\limits_{u \in \R^m}&d_{_{\rm ls}}(u) := -\frac{1}{2}\|u\|^2 - b^\top u\\
\mathrm{s.t.}& \|A^\top u\|_\infty \le \lambda.
\end{array}
\end{equation}
It can be shown that the optimal values of \eqref{lasso} and \eqref{dual_ls} are the same, and moreover, an optimal solution of \eqref{dual_ls} exists; see, for example, \cite[Theorem~3.3.5]{BL06}.
This dual problem will be used in developing termination criterion for our algorithms below.

Now we perform numerical experiments to study Algorithm 1 under the same three choices of $\{\beta_k\}$ as in the previous subsection. We choose $\lambda = 5$ in \eqref{lasso}, initialize all three algorithms at the origin and use the duality gap to terminate the algorithms. Specifically, as in the previous subsection, we make use of the fact that for any optimal solution $\bar{x}$ of \eqref{lasso}, $\nabla h(A\bar x)$ is an optimal solution of \eqref{dual_ls}. Hence, we define
\begin{equation*}
u^k=\min\left\{1,\frac{\lambda}{\left\|A^\top \nabla h(Ax^k)\right\|_\infty}\right\}\nabla h(Ax^k),
\end{equation*}
and terminate the algorithms once the duality gap is small, i.e.,
\begin{equation*}
\frac{|f(x^k) + g(x^k) - d_{_{\rm ls}}(u^k)|}{\max\{f(x^k) + g(x^k),1\}} \le 10^{-6}.
\end{equation*}
We also terminate them when the number of iterations hits $5000$.

The problems used in our experiments are generated as follows. For each $(m,n,s) = (300,3000,30)$, $(500,5000,50)$ and $(800,8000,80)$, we generate an $m\times n$ matrix $A$ with i.i.d. standard Gaussian entries. We then choose a support set $T$ of size $s$ uniformly at random, and generate an $s$-sparse vector $\hat x$ supported on $T$ with i.i.d. standard Gaussian entries. The vector $b$ is then generated as $b = A\hat x + 0.01\tilde{e}$, where $\tilde{e}$ has standard i.i.d. Gaussian entries.

The computational results are presented in Figures~\ref{ls3000}, \ref{ls5000} and \ref{ls8000}. We plot $\|x^k-x^*\|$ against the number of iterations in part (a) of each figure, where $x^*$ denotes the approximate solution obtained at termination of the respective algorithm; additionally, we plot $|F(x^k) - F_{\rm min}|$ against the number of iterations in part (b) of each figure, where $F_{\rm min}$ denotes the minimum of the three objective values obtained from the three algorithms. As in the previous subsection, we see from the figures that both $\{x^k\}$ and $\{F(x^k)\}$ generated by FISTA with both fixed and adaptive restart schemes are $R$-linearly convergent, which conforms with our theory. Additionally, the algorithm with restart performs better than FISTA and the proximal gradient algorithm.
\begin{figure}[h]
\centering
\caption{$l_1-ls:~n=3000,m=300, s=30$}\label{ls3000}
\subfigure[]{\includegraphics[height=3.8cm]{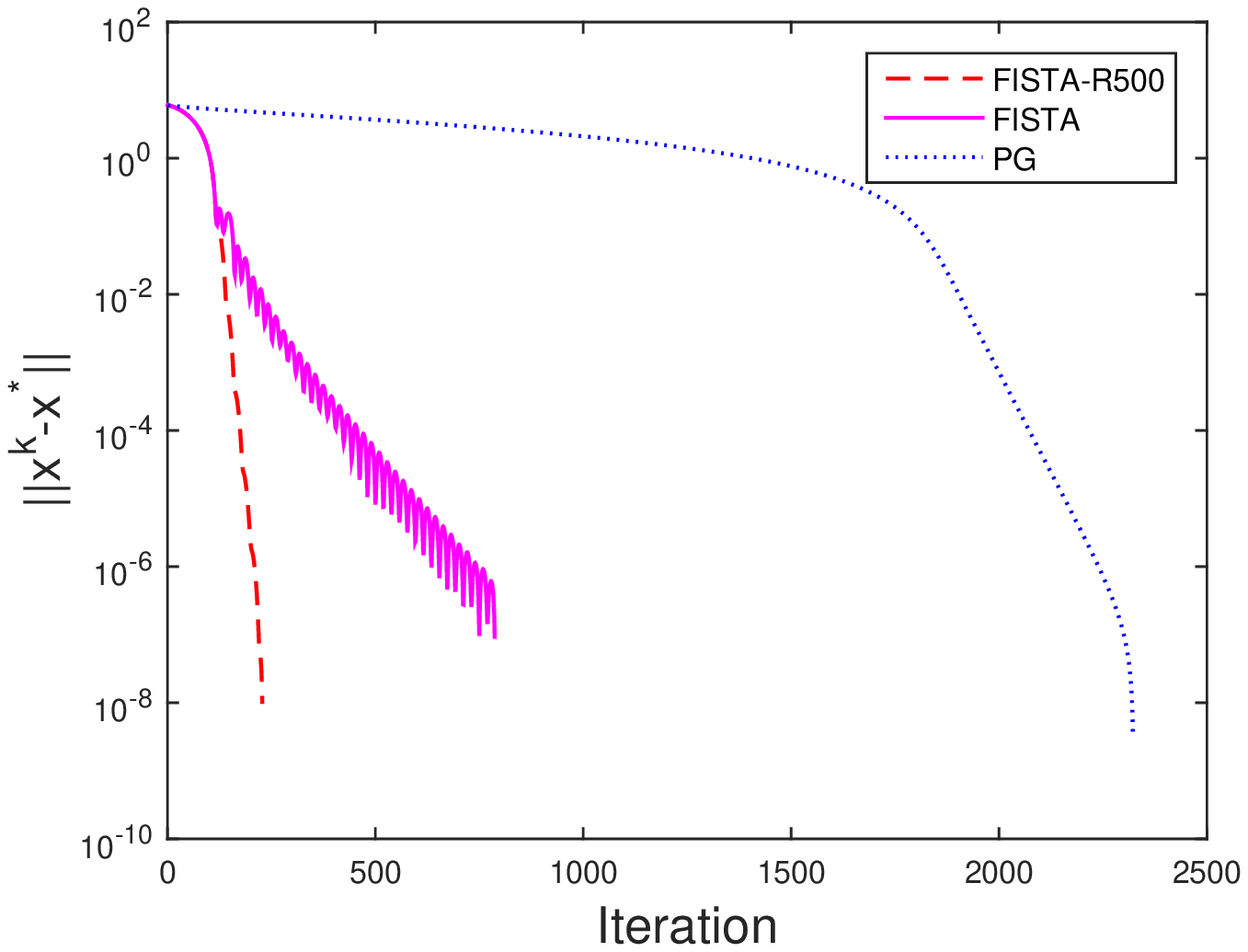}}
\subfigure[]{\includegraphics[height=3.8cm]{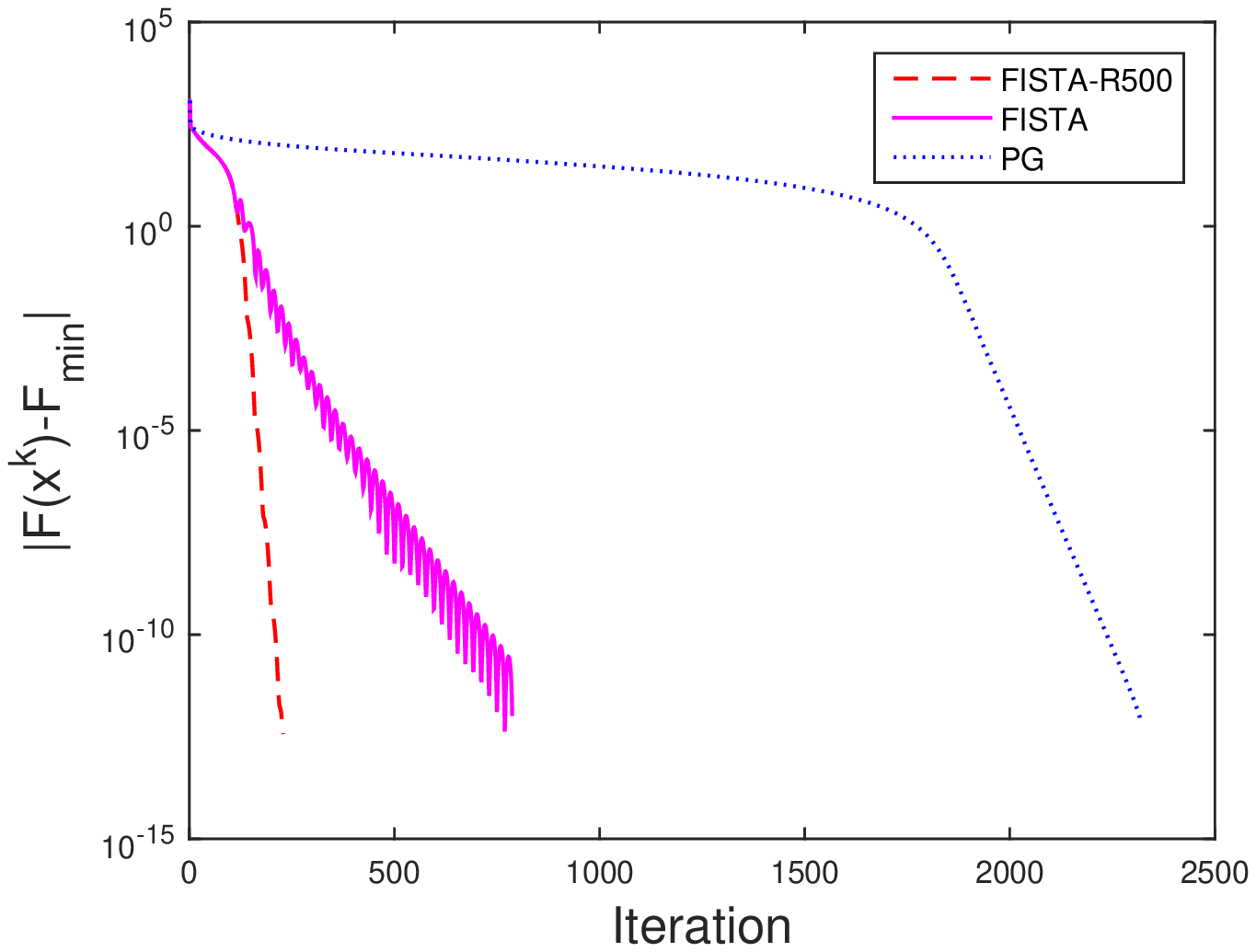}}
\end{figure}

\begin{figure}[h]
\centering
\caption{$l_1-ls:~n=5000,m=500, s=50$}\label{ls5000}
\subfigure[]{\includegraphics[height=3.8cm]{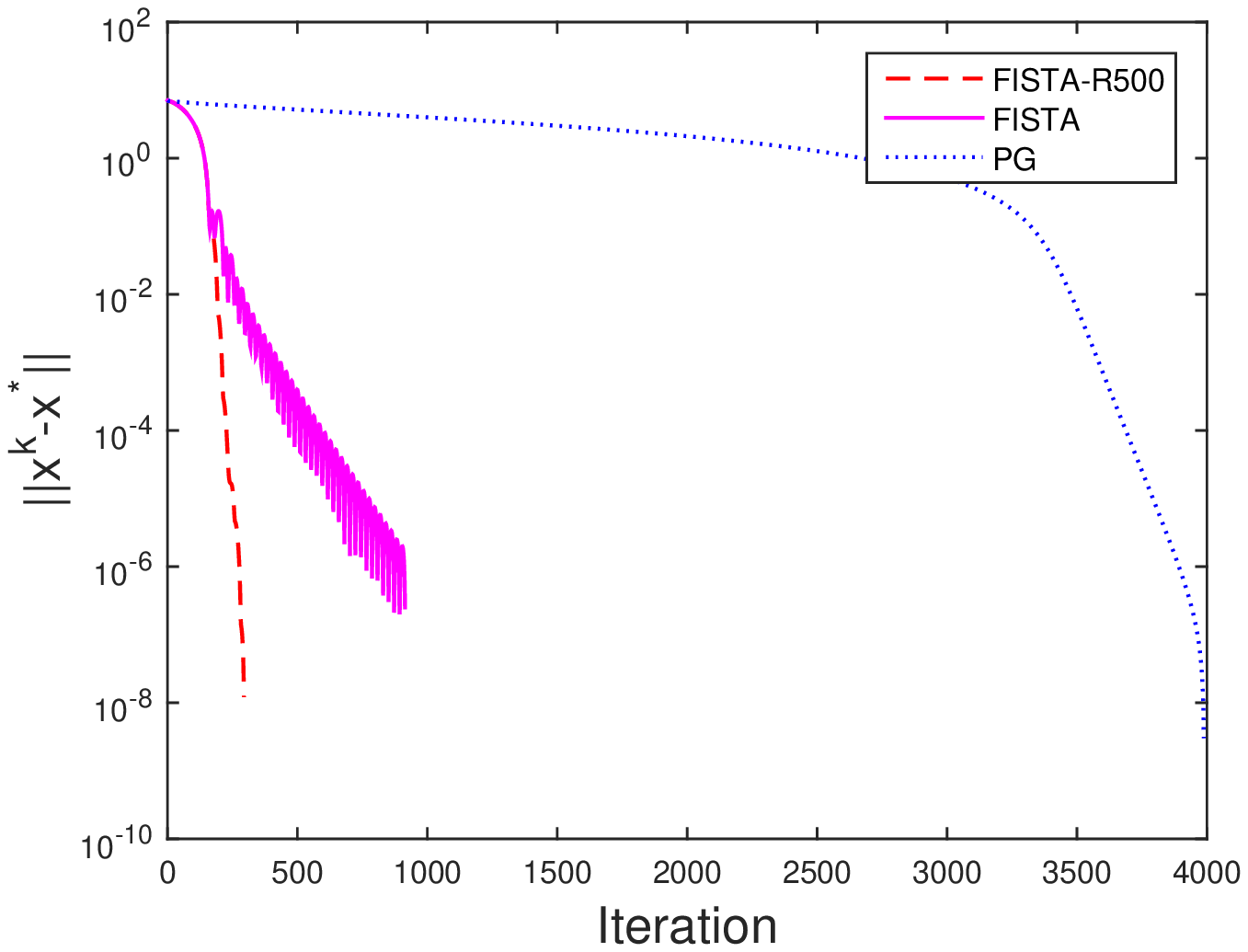}}
\subfigure[]{\includegraphics[height=3.8cm]{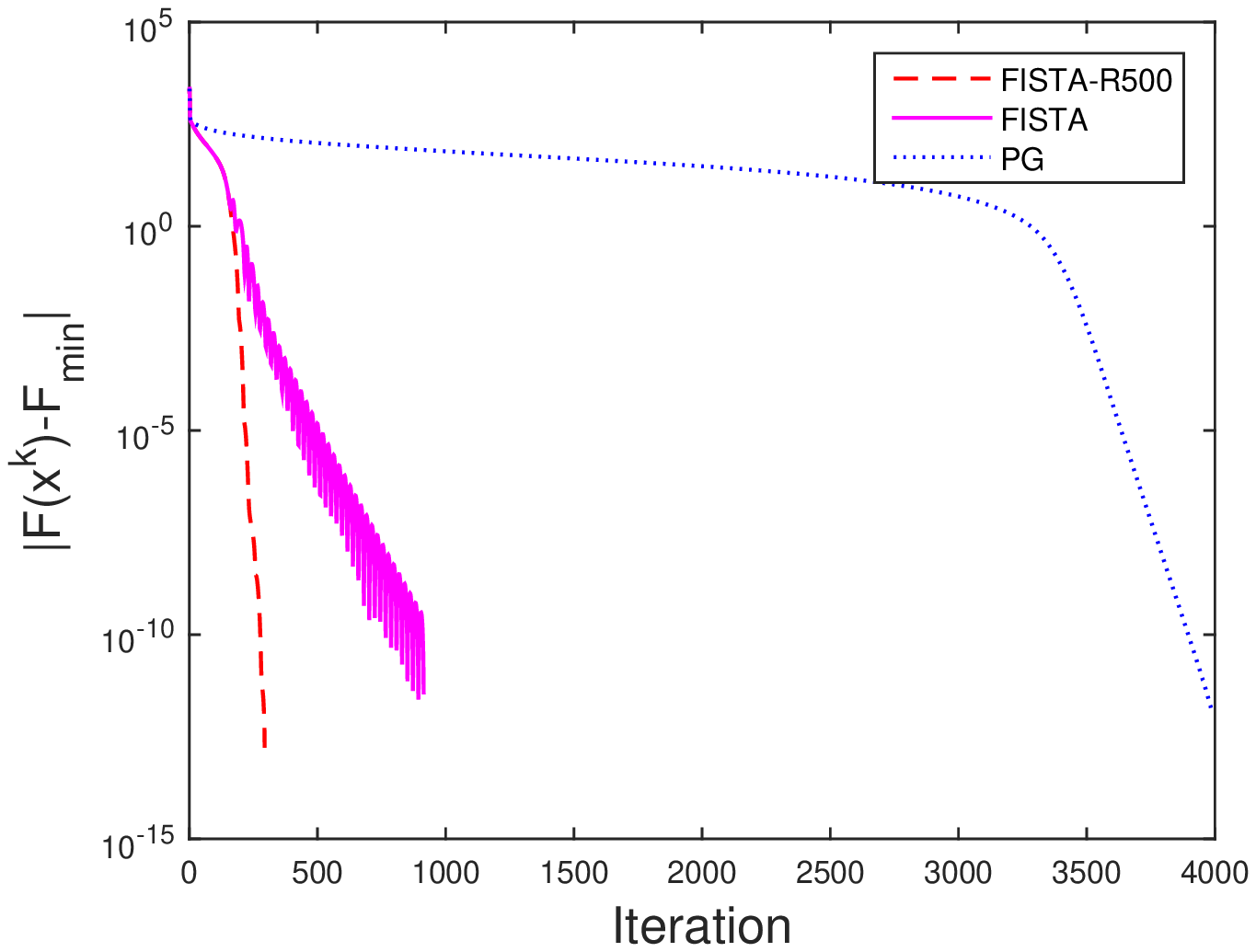}}
\end{figure}

\begin{figure}[h]
\centering
\caption{$l_1-ls:~n=8000,m=800, s=80$}\label{ls8000}
\subfigure[]{\includegraphics[height=3.8cm]{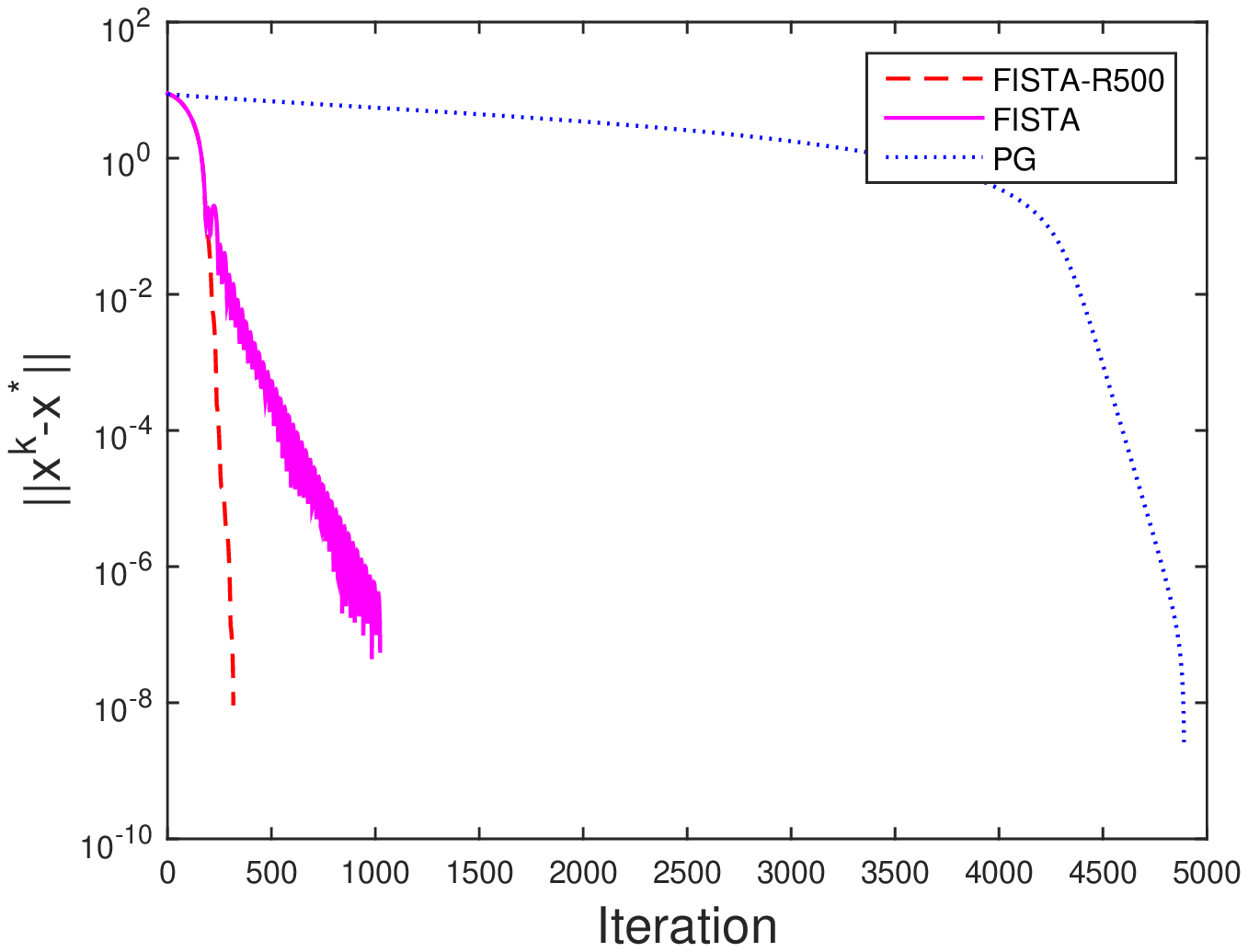}}
\subfigure[]{\includegraphics[height=3.8cm]{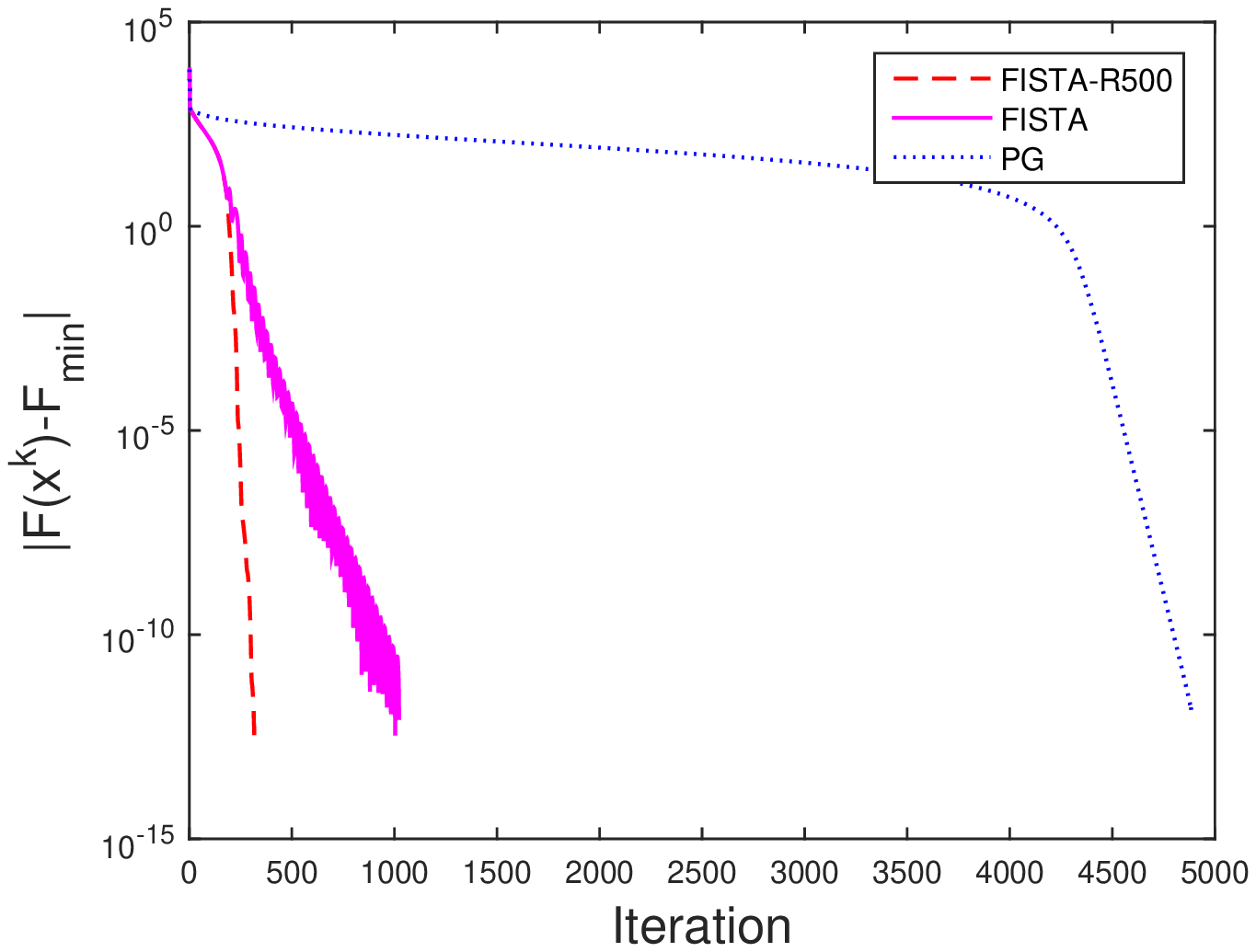}}
\end{figure}

\subsection{Nonconvex quadratic programming with simplex constraints}\label{sec4.2}
In this subsection, we look at problems of the following form, which are possibly nonconvex:
\begin{eqnarray}\label{NQS}
\begin{aligned}
&\min\limits_{x\in {{\mathbb{R}}^{n}}} ~\frac{1}{2} x^{\top}Ax-b^{\top}x\\
&~~\mathrm{s.t.} ~~e^\top x =s,~~x\ge 0,
\end{aligned}
\end{eqnarray}
where $A\in {{\mathbb{R}}^{n\times n}}$ is a symmetric matrix that is not necessarily positive semidefinite, $b \in \R^n$ and $s$ is a positive number. This is an example of nonconvex quadratic programming problems, which is an important class of problems in global optimization \cite{Chen2013, Gibbons1997, Ibaraki1988, Markowitz1952}. Notice that one can rewrite \eqref{NQS} in the form of \eqref{compro} by defining
\begin{equation}\label{NQSfg}
f(x)= \frac{1}{2} x^{\top}Ax-b^{\top}x, ~~~~g(x)=\delta_{\mathcal{S}}(x),
\end{equation}
where $\mathcal{S}=\left\{x\in \R^{n}:\;  e^\top x=s, ~x\ge 0 \right\}$. Moreover, it is clear that $f$ has a Lipschitz continuous gradient and $f + g$ is level bounded. Hence, Algorithm 1 can be applied to solving \eqref{NQS}. Furthermore, from the discussion following Assumption~\ref{errorbound}, Assumption~\ref{errorbound} is satisfied for \eqref{NQSfg}. Consequently, according to Theorem~\ref{Linearcon}, one should expect to see $R$-linear convergence of both the sequences $\{x^k\}$ and $\{F(x^k)\}$ generated by Algorithm 1 when $\bar\beta < \sqrt{\frac{L}{L+l}}$. Finally, since $A = A_1 - A_2$, where $A_1$ and $-A_2$ are the projections of $A$ onto the cone of positive semidefinite matrices and the cone of negative semidefinite matrices, respectively, we see that $f = f_1 - f_2$, where $f_1(x) = \frac12 x^\top A_1 x - b^\top x$ and $f_2(x) = \frac12 x^\top A_2 x$. In view of this, in our experiments below, we set $L = \max\{\lambda_{\max}(A),|\lambda_{\min}(A)|\}$ and $l = |\lambda_{\min}(A)|$ so that $L$ and $l$ are the Lipschitz continuity moduli of $\nabla f_1$ and $\nabla f_2$, respectively, and $L\ge l$.

Now we perform numerical experiments to study Algorithm 1 with two choices of $\{\beta_k\}$: $\beta_k \equiv 0$ (PG) and $\beta_k \equiv 0.98\sqrt{\frac{L}{L + l}}$ (PG$_{\rm e}$). In addition, we also perform the same experiments on FISTA.\footnote{We would like to point out that  FISTA applied to the nonconvex problem \eqref{NQS} is not known to converge, unlike the other two algorithms which have convergence guarantee by our theory.} We initialize all three algorithms at the origin, and terminate them when the successive changes of the iterates are small, i.e.,
\[
\frac{\|x^k - x^{k-1}\|}{\max\{\|x^k\|,1\}} \le 10^{-6}.
\]
We also terminate the algorithms when the number of iterations hits $5000$.

Our test problem is generated as follows. We generate a $2000\times 2000$ matrix $D$ with i.i.d. standard Gaussian entries. We then generate a symmetric matrix $A = D + D^\top$. Finally, the vector $b$ is generated with i.i.d. standard Gaussian entries, and $s$ is generated as $\max\{1,10t\}$, with $t$ chosen uniformly at random from $[0,1]$.

The computational results are presented in Figure~\ref{fig:nonconvex}. We plot $\|x^k - x^*\|$ against the number of iterations in Figure~\ref{fig:nonconvex} (a), where $x^*$ denotes the approximate solution obtained at termination of the respective algorithm; in addition, we plot $|F(x^k) - F_{\min}|$ against the number of iterations in Figure~\ref{fig:nonconvex} (b), with $F_{\min}$ being the minimum of the three objective values obtained from the three algorithms. We can see from Figure~\ref{fig:nonconvex} (a) that the sequence $\{x^k\}$ generated by Algorithm 1 with $\beta_k \equiv 0.98\sqrt{\frac{L}{L + l}}$ is $R$-linearly convergent, which conforms with our theory. However, from Figure~\ref{fig:nonconvex} (b), one can see that not all the algorithms are approaching $F_{\min}$. This is likely because the iterates generated by the algorithm got stuck at local minimizers.
\begin{figure}[h]
\centering
\caption{$Nonconvex ~Quadratic~ Problem$}\label{fig:nonconvex}
\subfigure[]{\includegraphics[height=3.8cm]{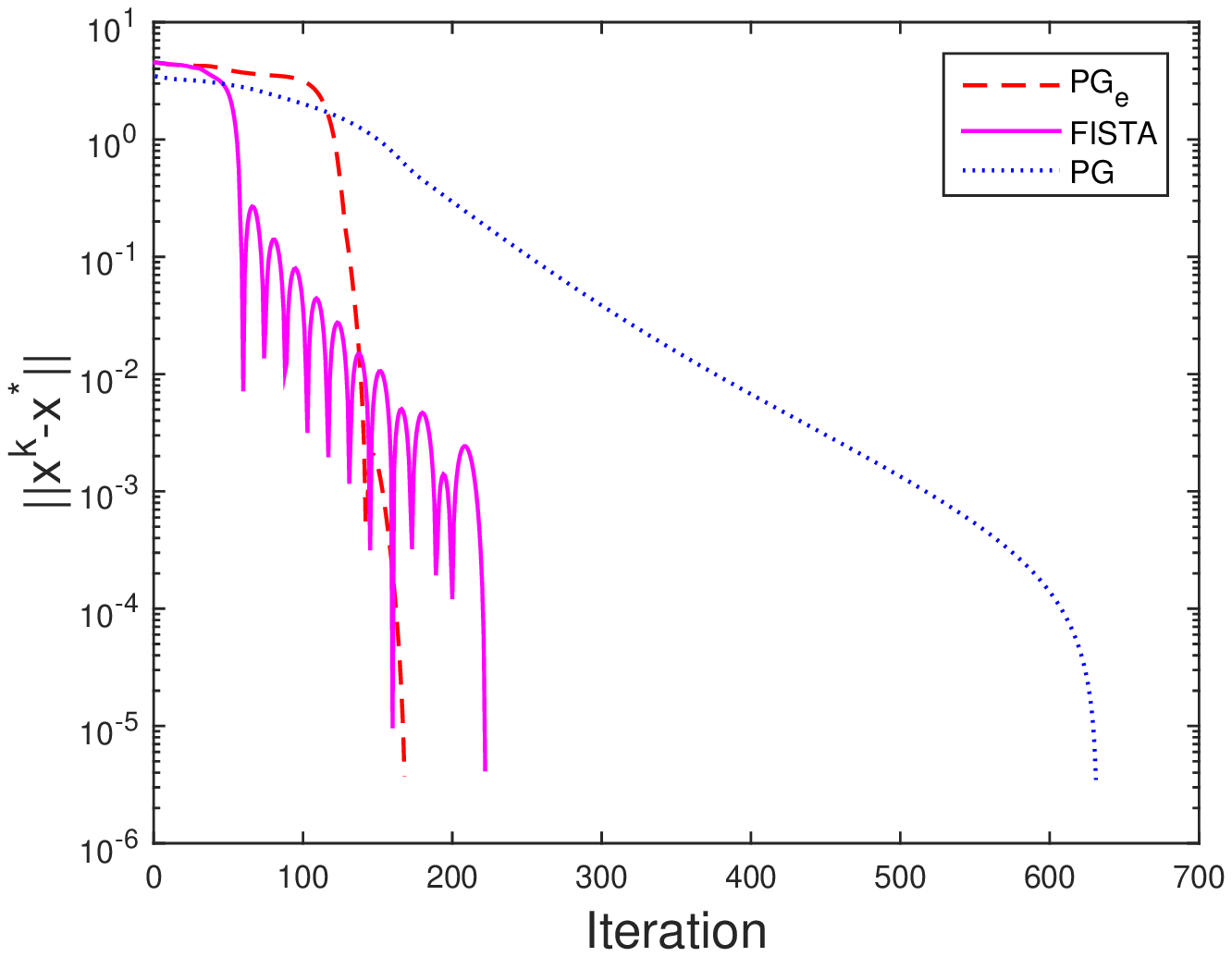}}
\subfigure[]{\includegraphics[height=3.8cm]{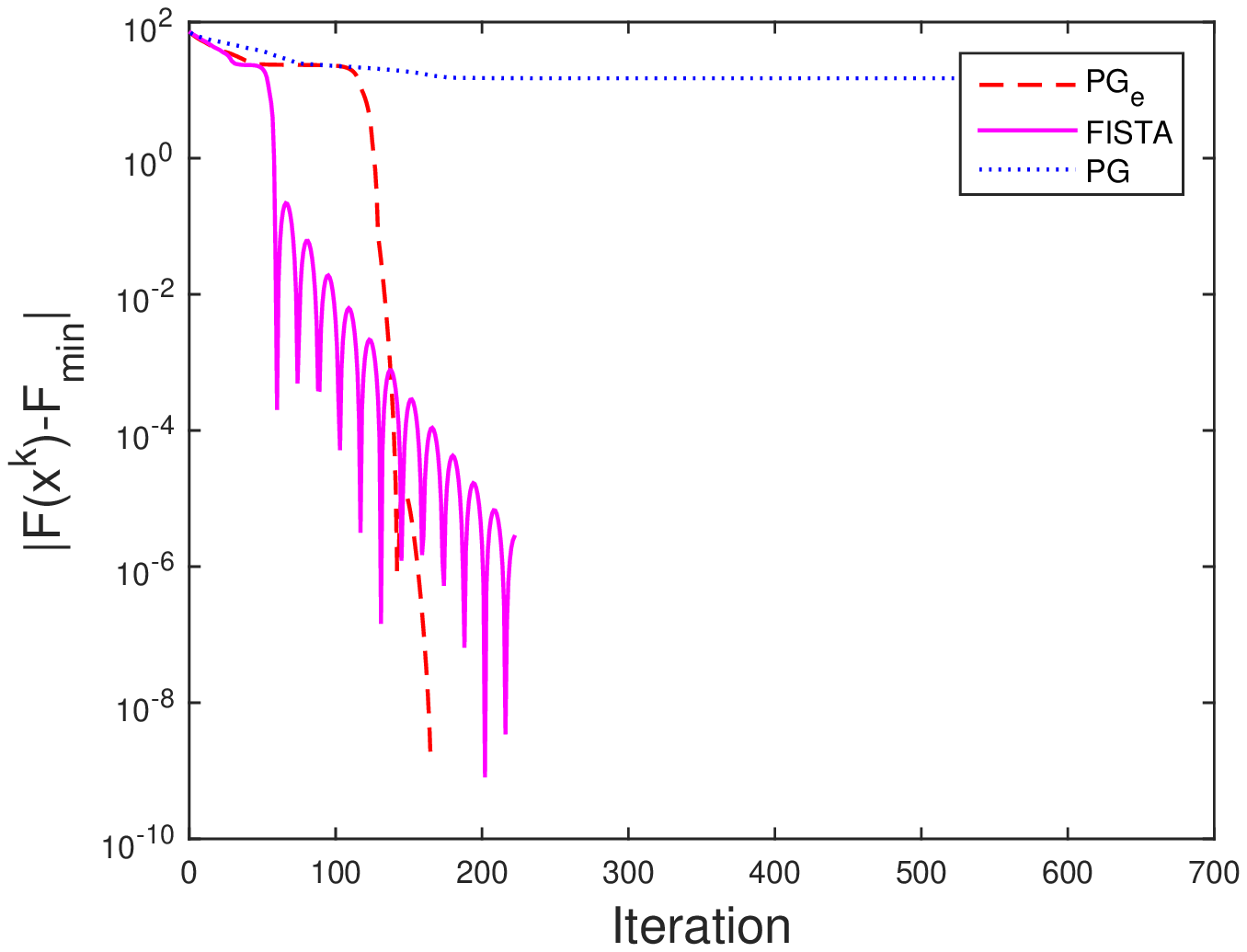}}
\end{figure}

To further evaluate the quality (in terms of function values at termination) of the approximate solution obtained by the algorithms, we perform a second experiment. In this second experiment, we generate random instances as follows: we generate an $n\times n$ matrix $D$ with i.i.d. standard Gaussian entries and symmetrize it to form $A = D + D^\top$; moreover, we generate a vector $b$ with i.i.d. standard Gaussian entries, and an $s=\max\{1,10t\}$, where $t$ is chosen uniformly at random from $[0,1]$.

In our test, for each $n=500$, $1000$, $1500$, $2000$ and $2500$, we generate $50$ random instances as described above. The computational results are reported in Table~\ref{t1}, where we present the number of iterations averaged over the $50$ instances for each $n$ (iter), and the function value at termination (fval), also averaged over the $50$ instances. One can see that while Algorithm 1 with $\beta_k \equiv 0.98\sqrt{\frac{L}{L + l}}$ (i.e., PG$_{\rm e}$) is always the fastest algorithm, the function values obtained can be slightly compromised for some instances.

\begin{table}[h]
\small
\caption{Comparing PG$_{\rm e}$, FISTA and PG on random instances.}\label{t1}
\begin{center}
\begin{tabular}{|c||c|c||c|c||c|c|}  \hline
\multicolumn{1}{|c||}{} & \multicolumn{2}{c||}{PG$_{\rm e}$} & \multicolumn{2}{c||}{FISTA} & \multicolumn{2}{c|}{PG}
\\ 
$n$& {\rm iter} & {\rm fval} & {\rm iter} & {\rm fval}  & {\rm iter} & {\rm fval}
\\ \hline
   500 &   120 & $-56.02$ &   175 & $-56.90$ &   322 & $-57.96$ \\
  1000 &   171 & $-69.77$ &   274 & $-66.79$ &   636 & $-66.93$ \\
  1500 &   166 & $-66.29$ &   270 & $-63.71$ &   560 & $-65.29$ \\
  2000 &   215 & $-80.72$ &   271 & $-80.43$ &   635 & $-81.21$ \\
  2500 &   284 & $-81.70$ &   359 & $-80.13$ &   813 & $-83.81$ \\
\hline
\end{tabular}
\end{center}
\normalsize
\end{table}

\section{Conclusion}\label{sec5}
In this paper, we study the proximal gradient algorithm with extrapolation for solving a class of nonconvex nonsmooth optimization problems. Based on the error bound condition, we establish the $R$-linear convergence of both the sequence $\{x^k\}$ generated by the algorithm and the corresponding sequence of objective values $\{F(x^k)\}$ if the extrapolation coefficients are below the threshold $\sqrt{\frac{L}{L+l}}$. We further demonstrate that our theory can be applied to analyzing the convergence of FISTA with the fixed restart scheme for convex problems.
Finally, we perform some numerical experiments to illustrate our results.

{\bf Acknowledgments.} We would like to thank the anonymous referees for their insightful comments and helpful suggestions that helped improve the manuscript.

\end{document}